\newtheorem{thm}{Theorem}
\newtheorem{prethm}{Theorem}
\newtheorem*{conj}{Conjecture}
\newtheorem{lem}{Lemma}
\begin{document}

\title[The Graph Ramsey Number $R(F_\ell,K_6)$]
{The Graph Ramsey Number $R(F_\ell,K_6)$}

\author[S. Kadota]{Shin-ya Kadota}
\author[T. Onozuka]{Tomokazu Onozuka}
\author[Y. Suzuki]{Yuta Suzuki}

\if0
\author[S. Kadota]{Shin-ya Kadota}
\address{Graduate School of Mathematics\\
Nagoya University\\
Chikusa-ku\\
Nagoya 464-8602\\
Japan.}
\email[S. Kadota]{
m13018c@math.nagoya-u.ac.jp
}
\author[T. Onozuka]{Tomokazu Onozuka}
\email[T. Onozuka]{
onozuka@toyota-ti.ac.jp
}
\author[Y. Suzuki]{Yuta Suzuki}
\email[Y. Suzuki]{
m14021y@math.nagoya-u.ac.jp
}
\fi

\subjclass[2010]{Primary 05C55, Secondary 05D10}

\keywords{Ramsey number; Fan graph; Complete graph}

\maketitle


\begin{abstract}
For a given pair of two graphs $(F,H)$,
let $R(F,H)$ be the smallest positive integer $r$
such that for any graph $G$ of order $r$,
either $G$ contains $F$ as a subgraph
or
the complement of $G$ contains $H$ as a subgraph.
Baskoro, Broersma and Surahmat (2005) conjectured that
\[
R(F_\ell,K_n)=2\ell(n-1)+1
\]
for $\ell\ge n\ge3$, where $F_\ell$ is the join of $K_1$ and $\ell K_2$.
In this paper, we prove that this conjecture is true for the case $n=6$.
\end{abstract}

\section{Introduction}
Throughout this paper, all graphs are finite and simple.
For a given pair of two graphs~$(F,H)$,
let $R(F,H)$ be the smallest positive integer $r$
such that for any graph $G$ of order $r$,
$G$ contains $F$ as a subgraph
or
the complement of $G$ contains $H$ as a subgraph.
In general, it is quite difficult to calculate the exact values of $R(F,H)$.
However, for sparse graphs $F$ and $H$,
there are many results on the exact values of $R(F,H)$.
In this paper, we consider fan graph $F_\ell$,
which is the join of $K_1$ and $\ell K_2$.
This fan graph is one example of such sparse graphs.

Recently,
although $K_n$ itself is the densest graph,
some authors succeeded in determining the value $R(F_\ell,K_n)$
for large $\ell$ and small $n$.
As for this problem,
Baskoro, Broersma and Surahmat~%
\cite{Baskoro_Broersma_Surahmat} conjectured:
\begin{conj}[The Baskoro-Broersma-Surahmat conjecture~%
\cite{Baskoro_Broersma_Surahmat}]
\mbox{}

\begin{center}
For any $\ell\ge n\ge3$, we have $R(F_\ell,K_n)=2\ell(n-1)+1$.
\end{center}
\end{conj}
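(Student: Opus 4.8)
The plan is to prove the conjecture by establishing the matching bounds $R(F_\ell,K_n)\ge 2\ell(n-1)+1$ and $R(F_\ell,K_n)\le 2\ell(n-1)+1$ separately, and to treat the parameter $n$ by induction. The lower bound requires only a single witness graph $G$ of order $2\ell(n-1)$ that contains no $F_\ell$ while $\overline G$ contains no $K_n$. I would take $G$ to be the disjoint union of $n-1$ cliques $K_{2\ell}$: each component has only $2\ell$ vertices and thus is too small to host the $(2\ell+1)$-vertex graph $F_\ell$, and no edge joins distinct components, so $F_\ell\not\subseteq G$; meanwhile $\overline G$ is the complete $(n-1)$-partite graph with parts of size $2\ell$, whose largest clique selects one vertex per part and so has order $n-1<n$. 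This yields $R(F_\ell,K_n)>2\ell(n-1)$.

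For the upper bound I would reformulate the goal: writing $N=2\ell(n-1)+1$, it suffices to show that every graph $G$ of order $N$ with $\alpha(G)\le n-1$ (equivalently, with $\overline G$ being $K_n$-free) contains $F_\ell$. The strategy is induction on $n$, with the known small cases $n\le 5$ serving as the base. Two degree-based arguments dispose of the extreme vertices. First, for a vertex $v$ of large degree the induced subgraph $G[N(v)]$ satisfies $\alpha(G[N(v)])\le\alpha(G)\le n-1$, and since the unmatched vertices of a maximum matching form an independent set, $\nu(G[N(v)])\ge(\deg(v)-(n-1))/2$; hence $\deg(v)\ge 2\ell+n-1$ already forces a matching of size $\ell$ inside $N(v)$, which together with $v$ is an $F_\ell$. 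Second, for a vertex $v$ of small degree set $A=V(G)\setminus(\{v\}\cup N(v))$; adjoining $v$ to any independent set of $G[A]$ shows $\alpha(G[A])\le n-2$, so $\overline{G[A]}$ is $K_{n-1}$-free, and when $|A|=N-1-\deg(v)\ge 2\ell(n-2)+1$ the inductive hypothesis $R(F_\ell,K_{n-1})=2\ell(n-2)+1$ gives $F_\ell\subseteq G[A]\subseteq G$. This second win applies whenever $\deg(v)\le 2\ell-1$.

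These two wins leave precisely the near-regular regime in which every vertex satisfies $2\ell\le\deg(v)\le 2\ell+n-2$, a range of $n-1$ possible degrees, and this is where the main difficulty lies. Here the crude matching bound falls short of $\ell$ by up to $(n-1)/2$ edges, so one can no longer read off $F_\ell$ from a single neighborhood; instead one must exploit the global structure forced by $\alpha(G)\le n-1$ together with the tight count $N=2\ell(n-1)+1$ and the hypothesis $\ell\ge n$, for example by analyzing how a fixed maximum independent set meets the various neighborhoods, or by augmenting a near-maximum matching in $G[N(v)]$ using vertices lying outside $N(v)$. Since the width of this degree window grows linearly in $n$, the attendant case analysis becomes progressively heavier; the present paper settles the range of $n-1=5$ degrees arising for $n=6$, using the established values for $n\le 5$ to drive the reduction above. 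It is exactly the resolution of this near-regular regime for every $n$ that I expect to be the main obstacle and that remains the bottleneck toward the full conjecture.
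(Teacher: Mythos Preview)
The statement is a conjecture, and the paper does not prove it; only the case $n=6$ is established (Theorem~\ref{main_thm}). Your proposal is likewise not a proof---and you say as much---but an outline of the standard reduction, and that outline coincides with the paper's framework. The lower-bound witness $(n-1)K_{2\ell}$ is exactly Theorem~\ref{lower_bound}. Your low-degree step, using $\alpha(G[V\setminus N[v]])\le n-2$ together with the inductive value $R(F_\ell,K_{n-1})$, is precisely how Lemma~\ref{degree_lemma} obtains $\delta(G)\ge 2\ell$ from Theorem~\ref{Chen_Zhang}. Your high-degree step via the observation that the unmatched vertices of a maximum matching are independent is the content of Lemma~\ref{matching} (Stahl's lemma $R(\ell K_2,K_n)=2\ell+n-2$); note that with the integrality of $\nu$ your own argument already gives the sharper threshold $\deg(v)\ge 2\ell+n-2$, hence $\Delta(G)\le 2\ell+n-3$, matching the paper's $\Delta(G)\le 2\ell+3$ for $n=6$ rather than the $2\ell+n-1$ you wrote.

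Your diagnosis of the near-regular window $2\ell\le\deg(v)\le 2\ell+n-3$ as the real obstacle is accurate: everything in the paper beyond Lemma~\ref{degree_lemma}---the clique bound, the partition into the sets $X_i$ and $X_{1i}$, the auxiliary independent $4$-sets $Q_i$, and the terminal case split---is an ad hoc treatment of this window for $n=6$, and no argument uniform in $n$ is offered. So there is no ``paper's proof'' of the conjecture to compare against; your sketch correctly isolates what is known from what is open.
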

In the last 20 years,
this conjecture was proved to be true for the case $n=3,4,5$:
\begin{prethm}[Gupta, Gupta and Sudan~\cite{Gupta_Gupta_Sudan},
{Li and Rousseau~\cite[Proposition 1]{Li_Rousseau}}]
\label{Gupta_Gupta_Sudan}
\mbox{}

\begin{center}
For any $\ell\ge3$, we have $R(F_\ell,K_3)=4\ell+1$.
\end{center}
\end{prethm}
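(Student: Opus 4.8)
The plan is to prove the equality $R(F_\ell,K_3)=4\ell+1$ by establishing the two matching inequalities separately, using throughout the reformulation that $\overline G$ contains $K_3$ if and only if $G$ has an independent set of size $3$; equivalently, $\overline G$ is triangle-free precisely when $\alpha(G)\le 2$. Thus the lower bound reduces to exhibiting a graph $G$ on $4\ell$ vertices with $\alpha(G)\le 2$ that avoids $F_\ell$, while the upper bound reduces to showing that every graph $G$ on $4\ell+1$ vertices with $\alpha(G)\le 2$ contains $F_\ell$. For the lower bound I would take $G=2K_{2\ell}$, the disjoint union of two cliques of order $2\ell$. Its complement is $K_{2\ell,2\ell}$, which is bipartite and hence triangle-free, so $\overline G\not\supseteq K_3$; and since every vertex of $G$ has degree $2\ell-1$ whereas the hub of $F_\ell$ has degree $2\ell$, we get $G\not\supseteq F_\ell$. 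Hence no suitable coloring exists on $4\ell$ vertices, giving $R(F_\ell,K_3)\ge 4\ell+1$.

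The engine for the upper bound is an elementary matching estimate: in any graph $H$ the vertices left uncovered by a maximum matching form an independent set (otherwise the matching could be augmented), so $|V(H)|-2\nu(H)\le\alpha(H)$. Now let $G$ have $N=4\ell+1$ vertices with $\alpha(G)\le 2$, and split on $\Delta(G)$. If some vertex $v$ satisfies $\deg(v)\ge 2\ell+1$, apply the estimate to $H=G[N(v)]$: since $\alpha(H)\le 2$ and $|V(H)|\ge 2\ell+1$, we get $\nu(H)\ge\frac{|V(H)|-2}{2}\ge\frac{2\ell-1}{2}$, forcing $\nu(H)\ge\ell$. These $\ell$ disjoint edges inside $N(v)$, together with $v$ as hub, form a copy of $F_\ell$.

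It remains to treat the case $\Delta(G)\le 2\ell$, which is where the real difficulty lies. Here $\overline G$ is triangle-free with $\delta(\overline G)\ge N-1-2\ell=2\ell$, and since $2\ell>\frac{2}{5}(4\ell+1)$ for $\ell\ge 3$, a classical structure theorem for triangle-free graphs (Andr\'asfai--Erd\H{o}s--S\'os) implies that $\overline G$ is bipartite. In the bipartition the larger side has at least $\lceil (4\ell+1)/2\rceil=2\ell+1$ vertices, and this side spans a clique in $G$ (there are no $\overline G$-edges inside a part). Choosing any vertex of this clique as hub and pairing the remaining $\ge 2\ell$ clique vertices into $\ell$ edges produces $F_\ell$. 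The whole obstacle is thus concentrated in this low-degree regime: the point is that triangle-freeness plus a large minimum degree forces a bipartite structure, whose larger class must be a big clique, and the parity of $4\ell+1$ rules out the extremal balanced configurations. I expect this to be exactly the step that resists generalization, since for the denser targets $K_n$ with $n\ge 4$ (treated later in the paper) no single clean structure theorem plays this role, so the analogous case analysis becomes substantially more delicate.
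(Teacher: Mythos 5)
Your proof is correct, but the paper itself offers no proof of this statement: Theorem~\ref{Gupta_Gupta_Sudan} is quoted from the literature (Gupta--Gupta--Sudan and Li--Rousseau), so the comparison is with the general toolkit the paper builds rather than with an internal argument. Your lower bound is exactly the Chv\'atal--Harary construction the paper uses for $n=6$ in Theorem~\ref{lower_bound} (there $5K_{2\ell}$, here $2K_{2\ell}$), and your matching-defect inequality $|V(H)|\le 2\nu(H)+\alpha(H)$ is precisely the $m=3$ case of Stahl's lemma (the paper's Lemma~\ref{matching}, which gives $R(\ell K_2,K_3)=2\ell+1$), re-proved on the spot by the augmenting-edge observation; your high-degree case ($\Delta(G)\ge 2\ell+1$ yields $\ell K_2$ in a neighborhood, hence $F_\ell$) is the same mechanism as the paper's Lemma~\ref{degree_lemma}. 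Where you genuinely diverge is the low-degree case: invoking the Andr\'asfai--Erd\H{o}s--S\'os theorem for the triangle-free complement with $\delta(\overline{G})\ge 2\ell>\tfrac{2}{5}(4\ell+1)$ to force bipartiteness, hence a clique of order at least $2\ell+1$ in $G$, is a clean structural shortcut that is unavailable for $n\ge 4$ (no comparably strong stability statement forces the complement to be $(n-1)$-partite in the relevant degree range), which is consistent with your closing remark and with the lengthy case analysis the paper must perform for $n=6$. All the numerical checks in your argument go through: $2\ell>\tfrac{2}{5}(4\ell+1)$ holds for $\ell\ge 2$, the larger part of the bipartition has at least $\lceil(4\ell+1)/2\rceil=2\ell+1$ vertices, $F_\ell\subset K_{2\ell+1}$, and in the construction $2K_{2\ell}$ no vertex has the hub degree $2\ell$ (alternatively, the connected graph $F_\ell$ has $2\ell+1$ vertices and cannot fit in one component $K_{2\ell}$). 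The only caveat is that your proof rests on a nontrivial imported theorem (AES), whereas the cited original proofs are elementary; one can also finish your second case without AES by noting that the $\overline{G}$-neighborhood of any vertex is a clique of $G$ of order at least $2\ell$ and then handling the extremal regular situation by hand, but your route is shorter.
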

\begin{prethm}[Baskoro, Broersma and Surahmat~%
\cite{Baskoro_Broersma_Surahmat}]
\label{Baskoro_Broersma_Surahmat}
\mbox{}

\begin{center}
For any $\ell\ge4$, we have $R(F_\ell,K_4)=6\ell+1$.
\end{center}
\end{prethm}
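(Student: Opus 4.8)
The plan is to establish the two inequalities $R(F_\ell,K_4)\ge 6\ell+1$ and $R(F_\ell,K_4)\le 6\ell+1$ separately. For the lower bound I would exhibit a graph $G$ on $6\ell$ vertices containing neither $F_\ell$ nor an independent set of size $4$, the latter being equivalent to $\overline{G}$ containing no $K_4$. The natural candidate is the disjoint union $G=3K_{2\ell}$ of three complete graphs on $2\ell$ vertices each. Its independence number is exactly $3$, since an independent set can use at most one vertex from each clique, so $\overline{G}$ is $K_4$-free; and since $F_\ell$ has $2\ell+1$ vertices and is connected while every component of $G$ has only $2\ell$ vertices, $G$ contains no $F_\ell$. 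This already gives $R(F_\ell,K_4)>6\ell$.

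For the upper bound, suppose toward a contradiction that $G$ is a graph on $N=6\ell+1$ vertices with $\alpha(G)\le 3$ (equivalently, $\overline{G}$ is $K_4$-free) that contains no $F_\ell$. The key local observation is that an $F_\ell$ rooted at a vertex $v$ is precisely a matching of size $\ell$ inside the neighborhood $N(v)$. Hence, for every $v$, the induced graph $G[N(v)]$ has matching number at most $\ell-1$; the vertices left exposed by a maximum matching are pairwise non-adjacent, so they form an independent set of size at least $\deg(v)-2(\ell-1)$ in $G$. Since $\alpha(G)\le 3$, this forces $\deg(v)-2\ell+2\le 3$, that is $\Delta(G)\le 2\ell+1$.

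Passing to the complement, $\overline{G}$ is then a $K_4$-free graph on $6\ell+1$ vertices with minimum degree $\delta(\overline{G})\ge N-1-\Delta(G)\ge 4\ell-1$. I would finish by invoking a structure theorem: the $K_4$-free analogue of the Andrásfai--Erdős--Sós theorem states that a $K_4$-free graph whose minimum degree exceeds $\frac{5}{8}N$ is $3$-partite. When this applies, $\overline{G}$ is $3$-colorable, so $V(G)$ is covered by three cliques of $G$, one of which has at least $\lceil(6\ell+1)/3\rceil=2\ell+1$ vertices; this clique contains $K_{2\ell+1}$, hence $F_\ell$, giving the desired contradiction.

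I expect the main obstacle to be the gap at the threshold. The easy bound $\Delta(G)\le 2\ell+1$ yields only $\delta(\overline{G})\ge 4\ell-1$, which exceeds $\frac{5}{8}N=\frac{5}{8}(6\ell+1)$ precisely when $\ell\ge 7$, so the cases $\ell\in\{4,5,6\}$ fall just below the Andrásfai--Erdős--Sós threshold. The real work should therefore lie in closing this small gap: ideally by sharpening the degree bound to $\Delta(G)\le 2\ell$, which would make the structure theorem applicable for every $\ell\ge 3$, by analyzing the borderline configuration in which the independent set exposed at a maximum-degree vertex has size exactly $3$, or, failing a uniform argument, by a direct structural case analysis of the finitely many remaining small values of $\ell$.
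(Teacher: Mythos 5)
Your lower bound is fine and is exactly the Chv\'atal--Harary-style construction the paper itself uses in its $n=6$ analogue (Theorem~\ref{lower_bound}): $3K_{2\ell}$ plays the role of $5K_{2\ell}$. Note, though, that the paper offers no proof of this statement at all --- Theorem~\ref{Baskoro_Broersma_Surahmat} is quoted from Baskoro--Broersma--Surahmat --- so the comparison must be with the original argument, which follows the same structural scheme the present paper executes for $n=6$: fix a maximum independent set $U$, partition $V(G)\setminus U$ by the sets $X_i=\{v : |N(v)\cap U|=i\}$, and do a global case analysis, with Stahl's result $R(\ell K_2,K_m)=2\ell+m-2$ supplying the degree bounds. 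Your route through the complement and the Andr\'asfai--Erd\H{o}s--S\'os theorem is genuinely different and slicker where it applies, and all the steps you do carry out are correct: the exposed-vertex argument gives $\Delta(G)\le 2\ell+1$, hence $\delta(\overline{G})\ge 4\ell-1$, and AES for $K_4$-free graphs (threshold $\frac{5}{8}N$) makes $\overline{G}$ tripartite, so some clique of $G$ has $\ge 2\ell+1$ vertices and contains $F_\ell$.

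The genuine gap is the one you name but do not close: $4\ell-1>\frac{5}{8}(6\ell+1)$ only for $\ell\ge 7$, so the cases $\ell\in\{4,5,6\}$ --- which are inside the claimed range --- remain unproven, and the proposed repair $\Delta(G)\le 2\ell$ cannot come from your local argument alone. Stahl's bound $R(\ell K_2,K_4)=2\ell+2$ is tight: the graph $K_{2\ell-1}\cup\overline{K_2}$ on $2\ell+1$ vertices has independence number $3$ and no $\ell K_2$, so a vertex of degree $2\ell+1$ whose neighborhood induces this graph is perfectly consistent with all your local constraints; in fact the present paper's $n=6$ analysis (Lemma~\ref{degree_lemma}) likewise only achieves $\Delta(G)\le 2\ell+3=2\ell+(n-3)$, and the matching-extremal neighborhoods are precisely the configurations its Sections~4--5 must laboriously analyze rather than exclude. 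So ruling out or exploiting degree $2\ell+1$ requires global structure --- e.g., noting that the two exposed neighbors $x,y$ form independent triples with many outside vertices and tracking adjacencies as in the paper's $X_{1i}$/$Q_i$ machinery, or using the lower Ramsey value $R(F_\ell,K_3)=4\ell+1$ on $G-N[v]$ as the paper does --- and without that (or a finite but nontrivial treatment of $\ell=4,5,6$) the proof establishes the theorem only for $\ell\ge 7$, not as stated.
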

\begin{prethm}[Chen and Zhang~%
\cite{Chen_Zhang}]
\label{Chen_Zhang}
\mbox{}

\begin{center}
For any $\ell\ge5$, we have $R(F_\ell,K_5)=8\ell+1$.
\end{center}
\end{prethm}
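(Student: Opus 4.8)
The plan is to prove both bounds. For the lower bound $R(F_\ell,K_5)\ge 8\ell+1$, I take $G=4K_{2\ell}$, the disjoint union of four complete graphs on $2\ell$ vertices. Each component has only $2\ell$ vertices and maximum degree $2\ell-1<2\ell$, so it cannot host the hub of an $F_\ell$ (which needs degree $2\ell$), and since $F_\ell$ is connected it cannot straddle two components; thus $G\not\supseteq F_\ell$. Meanwhile $\overline{G}$ is complete $4$-partite, whose cliques use at most one vertex per part, so $\overline{G}\not\supseteq K_5$. For the upper bound it suffices to show that every graph $G$ on $n=8\ell+1$ vertices with $\alpha(G)\le4$ contains $F_\ell$, since $\alpha(G)\le4$ is exactly the statement that $\overline{G}$ is $K_5$-free.

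So assume $|V(G)|=8\ell+1$, $\alpha(G)\le4$, and suppose for contradiction $G\not\supseteq F_\ell$. I first dispose of two easy regimes. If $\alpha(G)\le3$, then since $8\ell+1\ge 6\ell+1$, Theorem~\ref{Baskoro_Broersma_Surahmat} forces $F_\ell\subseteq G$; hence I may fix an independent set $W=\{w_1,w_2,w_3,w_4\}$. Next, the absence of $F_\ell$ controls every degree. For any vertex $v$, writing $H=G[N(v)]$ we have $\alpha(H)\le4$, and the Gallai identity $\tau(H)=|V(H)|-\alpha(H)$ together with $\tau(H)\le2\nu(H)$ gives $\nu(H)\ge(\deg v-4)/2$; thus $\deg v\ge 2\ell+3$ would produce a matching of size $\ell$ in $N(v)$ and an $F_\ell$ with hub $v$. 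Hence $\Delta(G)\le 2\ell+2$. In the other direction, if $\deg v\le 2\ell-1$ then the non-neighbourhood $B=V\setminus(N(v)\cup\{v\})$ has $|B|\ge 6\ell+1$ and $\alpha(G[B])\le3$ (an independent set of $B$ extends by $v$), so Theorem~\ref{Baskoro_Broersma_Surahmat} again yields $F_\ell\subseteq G[B]$. Consequently every vertex of $G$ has degree in $\{2\ell,2\ell+1,2\ell+2\}$, so $G$ is nearly $2\ell$-regular.

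This near-regular case is the heart of the matter, and global counting is powerless here: with $\alpha(G)\le4$ the Tur\'an bound gives $e(G)\ge\tfrac18 n(n-4)$, i.e.\ average degree essentially $2\ell$, matching the extremal graph $4K_{2\ell}$, so no inequality alone can force a contradiction. Instead I exploit $W$ structurally. Every vertex outside $W$ is adjacent to some $w_i$, so the neighbourhoods $A_i=N(w_i)$ cover $V\setminus W$; indeed $\bigcup_i A_i=V\setminus W$, and since $\sum_i|A_i|=\sum_i\deg(w_i)\ge 8\ell$ while $|\bigcup_i A_i|=8\ell-3$, they overlap in at least three vertices. Applying the Gallai--Edmonds decomposition to each $G[A_i]$, the cap $\alpha(G[A_i])\le4$ limits the deletion $G[A_i]-S$ to at most four components, while the deficiency $|A_i|-2\nu(G[A_i])\ge 2\ell-2(\ell-1)=2$ forces at least two odd factor-critical components across a separator $S$ with $|S|\le2$. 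As each such component has independence number at most four and a near-perfect matching, every $G[A_i]$ is pinned to a disjoint union of a bounded number of odd near-cliques on roughly $2\ell$ vertices in total — the local picture of $4K_{2\ell}$, except that the forced bound $\delta(G)\ge 2\ell$ rules out any single clique $K_{2\ell}$ (which would have $\nu=\ell$).

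The main obstacle, and the step I expect to demand the most care, is to show that four such split neighbourhoods cannot coexist on $8\ell+1$ vertices; here the extra vertex is decisive, since $4K_{2\ell}$ lives on $8\ell$ vertices with minimum degree $2\ell-1$, so the combination ``$n=8\ell+1$ and $\delta(G)\ge 2\ell$'' has no exact realization and the slack must be converted into an $F_\ell$. I would argue by tracking the few overlap vertices, those adjacent to two or more of the $w_i$: such a vertex $u$ sees large near-cliques inside two different $A_i$, and combining a near-perfect matching from each yields a matching of size $\ell$ inside $N(u)$, hence an $F_\ell$ with hub $u$; the alternative, that no overlap vertex has this property, is shown to force an independent set of size five, contradicting $\alpha(G)\le4$. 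I anticipate that this dichotomy fractures into several subcases according to $|S|\in\{0,1,2\}$ for each $A_i$ and to how the overlaps distribute among $A_1,\dots,A_4$, and that verifying each subcase is the genuinely laborious part of the argument.
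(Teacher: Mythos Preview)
The paper does not prove this statement at all: Theorem~\ref{Chen_Zhang} is quoted from \cite{Chen_Zhang} as a prior result and is used as a black box (for instance in Lemma~\ref{degree_lemma}). So there is no ``paper's own proof'' to compare against; what the paper does contain is the analogous argument one level up, for $R(F_\ell,K_6)$, and that argument follows the Chen--Zhang structural template rather than the Gallai--Edmonds route you sketch.

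On the merits of your proposal: the lower bound and the reduction to $\alpha(G)=4$ with $2\ell\le\delta(G)\le\Delta(G)\le 2\ell+2$ are correct and standard. After that, however, what you have is an outline, not a proof. You describe applying Gallai--Edmonds to each $G[A_i]$ and then ``tracking the few overlap vertices'' to force either an $F_\ell$ or an independent $5$-set, but you yourself say you ``anticipate that this dichotomy fractures into several subcases'' and that ``verifying each subcase is the genuinely laborious part''. That laborious part is the entire content of the theorem; nothing you have written so far rules out the near-extremal configurations, and your heuristic that an overlap vertex ``sees large near-cliques inside two different $A_i$'' is not justified --- an overlap vertex need not have many neighbours in either clique-like piece, and the Gallai--Edmonds pieces of $G[A_i]$ can be quite uneven in size. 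The actual Chen--Zhang argument (and the $K_6$ analogue carried out in this paper) proceeds differently: one partitions $V\setminus U$ into the level sets $X_i=\{v:|N(v)\cap U|=i\}$, shows each $X_{1i}=X_1\cap N(u_i)$ is a clique of order at most $2\ell-3$, squeezes $|X_1|$ into a narrow window via double counting, and then eliminates the few surviving profiles by explicit case analysis using lemmas like Lemma~\ref{flower_lemma} and Lemma~\ref{bridge_lemma}. Your Gallai--Edmonds framework may be workable, but as written it stops exactly where the difficulty begins.
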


In this paper,
we prove the Baskoro-Broersma-Surahmat conjecture for $n=6$.
\begin{thm}
\label{main_thm}
\mbox{}

\begin{center}
For any $\ell\ge6$, we have $R(F_\ell,K_6)=10\ell+1$.
\end{center}
\end{thm}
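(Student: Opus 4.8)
The plan is to establish the two inequalities $R(F_\ell,K_6)\ge 10\ell+1$ and $R(F_\ell,K_6)\le 10\ell+1$ separately, the first being routine and the second carrying essentially all of the difficulty. For the lower bound I would exhibit a single graph on $10\ell$ vertices avoiding both patterns, and the natural candidate is $G=5K_{2\ell}$, the disjoint union of five cliques of order $2\ell$. Any independent set meets each clique in at most one vertex, so $\alpha(G)=5$ and the complement of $G$ contains no $K_6$; moreover $F_\ell$ is connected on $2\ell+1$ vertices while every component of $G$ has only $2\ell$ vertices, so $G$ contains no $F_\ell$. This already gives $R(F_\ell,K_6)>10\ell$.

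For the upper bound I would argue by contradiction: suppose $G$ has $10\ell+1$ vertices, $\alpha(G)\le 5$, and contains no $F_\ell$. The basic local observation is that $G$ contains $F_\ell$ as soon as some vertex $v$ has a matching of size $\ell$ inside its neighbourhood, i.e.\ $\nu(G[N(v)])\ge\ell$ (writing $\nu$ for the matching number), since that matching together with $v$ spans a copy of $F_\ell$. Hence under our assumption $\nu(G[N(v)])\le\ell-1$ for every $v$. The vertices left uncovered by a maximum matching of $G[N(v)]$ form an independent set, necessarily of size at most $\alpha(G)\le 5$, so $\deg(v)=|N(v)|\le 2(\ell-1)+5=2\ell+3$ for all $v$. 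Taking a vertex $v_0$ of maximum degree and setting $A=N(v_0)$ and $B=V(G)\setminus(A\cup\{v_0\})$, the non-neighbour set $B$ satisfies $\alpha(G[B])\le 4$, because any independent subset of $B$ together with $v_0$ is independent in $G$; at the same time $|B|=10\ell-\deg(v_0)\ge 8\ell-3$. Thus $B$ is a large set of very small independence number, and the uncovered-vertex bound forces $\nu(G[B])\ge(|B|-4)/2\ge 4\ell-3$.

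The heart of the proof is a global matching and counting argument showing these constraints are incompatible. I would fix a maximum matching $M$ of $G$, which has $\nu(G)\ge(10\ell+1-5)/2=5\ell-2$ edges and leaves at most five vertices uncovered. The key sufficient condition is that $G$ contains $F_\ell$ whenever some vertex is adjacent to both endpoints of at least $\ell$ edges of $M$, since those $\ell$ edges are automatically disjoint and lie in that vertex's neighbourhood. I would therefore estimate the total coverage $\sum_{xy\in M}|N(x)\cap N(y)|$ from below, combine it with $\alpha(G[B])\le 4$ and the degree bound to understand how the common neighbourhoods are distributed, and seek a single vertex attaining coverage $\ell$. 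The hypothesis $\ell\ge 6$ enters here to keep the clusters large enough for the extremal count to be meaningful.

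I expect the main obstacle to be the sharpness of this counting: the extremal graph $5K_{2\ell}$ shows that the average coverage per vertex is \emph{exactly} about $\ell$, so no crude averaging can conclude. One must instead exploit the rigid structure forced by $\alpha(G)\le 5$ — essentially that $G$ splits into five nearly complete clusters — and track how the single surplus vertex (the ``$+1$'' beyond $10\ell$), together with any edge crossing between clusters or any uncovered vertex of $M$, produces the extra coverage that pushes some neighbourhood matching from $\ell-1$ up to $\ell$. Concretely this should require a Gallai--Edmonds-type dissection of the neighbourhoods $A$ and $B$ and a case analysis according to how the clusters, the set $B$, and the uncovered vertices interact, with each case driven to the exact threshold $10\ell+1$; carrying out this case analysis tightly, rather than any single inequality, is where the real work lies.
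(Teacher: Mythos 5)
Your lower bound is exactly the paper's (the graph $5K_{2\ell}$), and your derivation of $\Delta(G)\le 2\ell+3$ is sound: the vertices missed by a maximum matching of $G[N(v)]$ form an independent set, so $d(v)\le 2(\ell-1)+\alpha(G)\le 2\ell+3$. But the upper bound is where all of the content lies, and there your text is a plan rather than a proof. The pivotal step --- producing a single vertex adjacent to both endpoints of $\ell$ edges of a maximum matching $M$ --- is introduced only as ``estimate $\sum_{xy\in M}|N(x)\cap N(y)|$ \dots\ and seek a single vertex attaining coverage $\ell$,'' and you yourself observe that in the extremal graph $5K_{2\ell}$ the average coverage sits exactly at the threshold, so no averaging can close the gap. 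The Gallai--Edmonds dissection and case analysis that you correctly identify as ``where the real work lies'' is never carried out, and nothing in the sketch indicates a mechanism by which the single surplus vertex beyond $10\ell$ forces coverage $\ell$ somewhere. Note also that large matchings by themselves buy nothing: $\nu(G)\ge 5\ell-2$ or $\nu(G[B])\ge 4\ell-3$ is useless for building $F_\ell$ unless the $\ell$ edges lie inside one vertex's neighbourhood, which is precisely the unproven point. So there is a genuine gap: the entire theorem is concentrated in the step you defer.

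It is also worth flagging an ingredient your sketch lacks and the paper uses essentially: the previously known value $R(F_\ell,K_5)=8\ell+1$ (Chen--Zhang). The paper applies it twice --- once to reduce to the case $\alpha(G)=5$ exactly, and once, applied to $S=V(G)\setminus N[v]$ with $|S|\ge 8\ell+1$ and $\alpha(G[S])\le 4$, to obtain the minimum-degree bound $\delta(G)\ge 2\ell$, which later kills the final configurations (a vertex $v_0$ with $d(v_0)\le 9<2\ell$). Your set $B$ has only $|B|\ge 8\ell-3$ vertices, just short of the $8\ell+1$ needed to invoke that result, and your sketch never recovers $\delta(G)\ge 2\ell$ by other means. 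The paper's actual engine, replacing your coverage count, is structural: fix a $5$-set $U$ of independent vertices, partition $V(G)\setminus U$ into $X_1,\dots,X_5$ by the number of neighbours in $U$, show each $X_{1i}$ is a clique with $|X_{1i}|\le 2\ell-3$ (via $\omega(G)\le 2\ell-2$), extract independent $4$-sets $Q_i\subset N(u_i)\setminus X_{1i}$ whenever $d(u_i)=2\ell+3$, and derive a contradiction from the exact count $20=\sum_i|Q_i|\le |X_2|+2|X_3|+5$ (or its variants) in two cases on $|X_1|$ and $X_5$. If you want to salvage your route you would, at minimum, need the $n=5$ Ramsey value as input and a concrete combinatorial device in place of ``seek a vertex of coverage $\ell$''; the $X_i$/$Q_i$ bookkeeping is exactly such a device.
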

Our main strategy is
based on the approach of Chen and Zhang~\cite{Chen_Zhang}.
In particular, we shall heavily use their structural setting.
However, we need some additional insight on the Chen-Zhang structure
than their paper~\cite{Chen_Zhang}.

We briefly summarize our notation and terminology.
Let $G=(V,E)$ be a graph.
For any subset $S\subset V$, we use $G[S]$ to denote the subgraph induced by $S$.
For any vertex $v\in V$,
we denote the neighborhood of $v$ by $N(v)$,
i.e.
\begin{align*}
N(v)=&
\Set{w\in V|vw\in E,\ w\neq v},
\end{align*}
and we let $N[v]=N(v)\cup\{v\}$.
We let
\[
d(v)=|N(v)|,\quad
\delta(G)=\min_{v\in V}d(v),\quad
\Delta(G)=\max_{v\in V}d(v).
\]
Moreover, we denote by $\omega(G)$ the clique number of $G$,
i.e.\ the order of the largest clique in $G$,
and denote by $\alpha(G)$ the independence number of $G$,
i.e.\ the order of the largest independent set in $G$.
We refer to the book~\cite{Diestel}
for other graph theoretical notation and terminology
not described in this paper.

\section{The lower bound}
The lower bound of $R(F_\ell,K_6)$ is given by the following theorem,
which is just a special case of the Chv\'atal-Harary lemma~%
\cite[Lemma~4]{Chvatal_Harary}.
\begin{thm}
\label{lower_bound}
For any $\ell\ge 6$,
we have
$R(F_\ell,K_6)\ge10\ell+1$.
\end{thm}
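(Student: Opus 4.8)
The plan is to prove the strict inequality $R(F_\ell,K_6)>10\ell$ by exhibiting a single graph $G$ on $10\ell$ vertices that witnesses it: one for which $G$ itself contains no copy of $F_\ell$ while its complement $\overline{G}$ contains no copy of $K_6$. This is exactly the instance of the Chv\'atal--Harary construction \cite{Chvatal_Harary} tuned to the parameters $(F_\ell,K_6)$, and the two required properties will follow from elementary observations about connectivity and independence number, so no part of the argument should require real work.

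Concretely, I would set
\[
G=5K_{2\ell},
\]
the disjoint union of five cliques each of order $2\ell$, so that $|V(G)|=5\cdot2\ell=10\ell$. The verification then splits into two independent checks. First, to see that $\overline{G}$ contains no $K_6$, observe that a clique in $\overline{G}$ is the same thing as an independent set in $G$; since $G$ is a union of five cliques, any independent set meets each clique in at most one vertex, whence $\alpha(G)=5<6$ and $\overline{G}$ has no $K_6$. Second, to see that $G$ contains no $F_\ell$, recall that $F_\ell=K_1+\ell K_2$ is connected and has exactly $2\ell+1$ vertices; any subgraph of $G$ isomorphic to $F_\ell$ would, by connectivity, have to lie inside a single component of $G$, but each component has only $2\ell<2\ell+1$ vertices, which is impossible.

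Combining the two checks, $G$ realises an edge-colouring of $K_{10\ell}$ with neither an $F_\ell$ in the first colour class nor a $K_6$ in the second, so $R(F_\ell,K_6)\ge10\ell+1$, as claimed. Equivalently, one simply invokes \cite[Lemma~4]{Chvatal_Harary} with the identifications $c(F_\ell)-1=2\ell$ (since $F_\ell$ is connected on $2\ell+1$ vertices) and $\chi(K_6)-1=5$, whose product is $10\ell$.

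I do not expect any genuine obstacle in this direction: it is the \emph{easy}, constructive half of the theorem, and the only point demanding care is the bookkeeping of $2\ell+1$ against $2\ell$, which forces each clique to have order exactly $2\ell$. Taking the cliques any larger would open up room for a copy of $F_\ell$, while taking them smaller would waste vertices and weaken the bound; and taking more than five cliques would produce a $K_6$ in $\overline{G}$. All the difficulty of Theorem~\ref{main_thm} is therefore concentrated in the matching upper bound $R(F_\ell,K_6)\le10\ell+1$.
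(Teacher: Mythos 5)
Your proposal is correct and uses exactly the paper's construction: the graph $G=5K_{2\ell}$, with the same two observations ($\alpha(G)=5$ blocks $K_6$ in $\overline{G}$, and the connected $F_\ell$ on $2\ell+1$ vertices cannot fit in a component of order $2\ell$). The paper states this witness without spelling out the verification, so your write-up simply makes explicit the routine details the authors left implicit.
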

\begin{proof}
It is sufficient to give a graph $G$ of order $10\ell$
such that $G$ does not contain $F_\ell$
and $\overline{G}$ does not contain $K_6$.
For example, the graph
$G=5K_{2\ell}$
satisfies this condition.
Hence the theorem follows.
\end{proof}
Hence we prove the upper bound $R(F_\ell,K_6)\le10\ell+1$ in the following sections.
\begin{thm}
\label{upper_bound}
For any $\ell\ge 6$,
we have
$R(F_\ell,K_6)\le10\ell+1$.
\end{thm}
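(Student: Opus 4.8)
The plan is to prove the contrapositive in the following form: every graph $G$ of order $10\ell+1$ with $\alpha(G)\le 5$ contains $F_\ell$ as a subgraph. Suppose, for contradiction, that $G$ is such a graph containing no $F_\ell$. First I would reduce to the case $\alpha(G)=5$: since $10\ell+1\ge 8\ell+1$ and $\ell\ge 6\ge 5$, Theorem~\ref{Chen_Zhang} shows that any graph of this order with $\alpha(G)\le 4$ already contains $F_\ell$, so we may assume $\alpha(G)=5$ and fix a maximum independent set $I=\{a_1,\dots,a_5\}$.

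Next I would record the basic local consequence of the absence of $F_\ell$. A copy of $F_\ell$ centered at a vertex $v$ is exactly a matching of size $\ell$ inside $G[N(v)]$, so the no-$F_\ell$ hypothesis says that the matching number $\nu(G[N(v)])$ is at most $\ell-1$ for every $v$. Because the vertices left uncovered by a maximum matching form an independent set, one gets $d(v)=|N(v)|\le 2\nu(G[N(v)])+\alpha(G[N(v)])\le 2(\ell-1)+5=2\ell+3$, so $\Delta(G)\le 2\ell+3$. The same observation applied to a clique shows $\omega(G)\le 2\ell$: a clique on $2\ell+1$ vertices would place a $K_{2\ell}$ (matching number $\ell$) into the neighborhood of any of its vertices.

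These bounds already almost suffice. One clean way to see the tension is via the Caro--Wei inequality $\sum_{v}1/(d(v)+1)\le\alpha(G)=5$: if every neighborhood were a clique, i.e.\ $\alpha(G[N(v)])=1$ and $d(v)\le 2\ell-1$, we would get $\sum_v 1/(d(v)+1)\ge (10\ell+1)/(2\ell)>5$, a contradiction. The difficulty is precisely the vertices whose neighborhood contains a large independent set: such a vertex may have degree as high as $2\ell+3$, and these high-degree vertices are cheap enough in the Caro--Wei sum to absorb the extra vertex, so the naive count only yields a bound of the shape $|V(G)|\le 10\ell+O(1)$ rather than the required $|V(G)|\le 10\ell$.

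The heart of the proof, following the structural framework of Chen and Zhang~\cite{Chen_Zhang}, is therefore to control these vertices. I would cluster $V(G)\setminus I$ according to its adjacencies to $I$ and analyze each neighborhood $G[N(v)]$ through its Gallai--Edmonds decomposition, using $\nu(G[N(v)])\le\ell-1$ together with $\alpha(G)=5$ to force each neighborhood to be a near-clique with only a bounded independent ``defect''. The aim is to partition $V(G)$ into at most five clusters, each of size at most $2\ell$ --- in the extremal configuration $5K_{2\ell}$ these are exactly the five cliques --- which yields $|V(G)|\le 10\ell$ and the desired contradiction. The main obstacle, and the place where an additional insight beyond~\cite{Chen_Zhang} is needed, is the bookkeeping for the vertices $v$ with $\alpha(G[N(v)])$ close to $5$ and for the edges running between different clusters: one must show that the ``excess degree'' they contribute is globally compensated, so that the total vertex count can never reach $10\ell+1$.
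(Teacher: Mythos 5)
Your preliminaries are correct and coincide with the paper's: the reduction to $\alpha(G)=5$ via Theorem~\ref{Chen_Zhang}, and the bound $\Delta(G)\le 2\ell+3$ (your matching-plus-uncovered-independent-set count is exactly the content of Stahl's lemma, Lemma~\ref{matching}, which the paper uses for the same purpose). Your $\omega(G)\le 2\ell$ is also correct, though weaker than the paper's Lemma~\ref{large_clique}, which sharpens it to $\omega(G)\le 2\ell-2$ by a nontrivial argument; that slack of two is used repeatedly in the paper's counting. But from that point on your proposal is a research plan, not a proof. The entire difficulty of the theorem lies in the step you defer: you state the \emph{aim} of partitioning $V(G)$ into at most five clusters of size at most $2\ell$, and you explicitly concede that one ``must show that the excess degree is globally compensated'' --- but nothing in the proposal shows this, nor is any mechanism given by which the Gallai--Edmonds decomposition of each $G[N(v)]$ would force such a global partition. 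Note also that the paper never establishes a five-cluster partition at all; the fact that your target structure matches the extremal graph $5K_{2\ell}$ does not mean a counterexample must carry it, and proving it would itself be the whole problem.

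For comparison, what the paper does in place of your missing step is quite different in shape: it fixes an independent $5$-set $U$, partitions $V(G)\setminus U$ into the sets $X_i$ of vertices with exactly $i$ neighbors in $U$, proves each $X_{1i}$ induces a clique of order at most $2\ell-3$ (Lemma~\ref{X_1i_clique}), and then plays the counting identities (\ref{X_total}) and (\ref{bridge_whole_X}) against a web of adjacency constraints (Lemmas~\ref{flower_lemma} through \ref{Q_intersect}). A key device entirely absent from your sketch is the family of independent $4$-sets $Q_i\subset N(u_i)\setminus X_{1i}$ satisfying (\ref{Q_cond}), which exist whenever $d(u_i)=2\ell+3$ (Lemma~\ref{Q_lemma}); counting how the $Q_i$ can meet $X_2$, $X_3$ and $X_5$ produces the contradictions in the final case analysis, e.g.\ the impossible inequality $20\le 19$ in (\ref{Q_summation}), or a vertex $v_0$ with $d(v_0)<2\ell$. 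That last contradiction requires the lower bound $\delta(G)\ge 2\ell$ (Lemma~\ref{degree_lemma}, obtained by applying Theorem~\ref{Chen_Zhang} to $G-N[v]$), which you never derive and which your Caro--Wei discussion does not supply. Equally essential and missing is the careful choice of $U$ itself (Lemma~\ref{Onozuka}, guaranteeing alternative (A) or (B)), without which the case analysis cannot even be set up. In short: the warm-up is fine and matches the paper, but the substance of the proof is absent, and your outline does not indicate a workable substitute for it.
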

We prove this upper bound by contradiction.
In the remaining part of this paper,
we assume $\ell\ge6$ and that there exists a graph $G$ of order $10\ell+1$
such that $F_\ell\not\subset G$ and $K_6\not\subset\overline{G}$.
By Theorem~\ref{Chen_Zhang},
we may assume that $\alpha(G)=5$.

\section{Preliminary lemmas}
In this section,
we prove several lemmas on basic properties of the graph $G$.
We start with the following simple observation,
which is related to the number of independent edges in $G$.

\begin{lem}
There is no vertex $v\in G$ for which $G[N(v)]$ contains $\ell K_2$.
\end{lem}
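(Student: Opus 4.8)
The plan is to argue by contradiction, directly unpacking the definition of the fan graph $F_\ell$ as the join $K_1 + \ell K_2$. Suppose, contrary to the claim, that there is a vertex $v \in G$ for which $G[N(v)]$ contains $\ell K_2$ as a subgraph. By definition of $\ell K_2$, this gives $\ell$ pairwise disjoint edges lying entirely within $N(v)$, spanned by $2\ell$ distinct vertices; I would fix such a matching and label its vertices, noting that all of them are neighbors of $v$.

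The key observation is then simply that each of these $2\ell$ vertices lies in $N(v)$ and is therefore joined to $v$ by an edge of $G$. Viewing $v$ as the hub $K_1$, this means $v$ is adjacent to every endpoint of the chosen matching. Combining the hub $v$ with the matching $\ell K_2$ inside $N(v)$, together with the $2\ell$ spokes from $v$, reconstructs precisely the join $K_1 + \ell K_2 = F_\ell$ on the vertex set $\{v\} \cup V(\ell K_2)$. Hence $F_\ell \subset G$.

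This contradicts the standing assumption that $F_\ell \not\subset G$, which was imposed at the start of the proof of Theorem~\ref{upper_bound}, and so no such vertex $v$ can exist.

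I do not expect any genuine obstacle here: the statement is an immediate consequence of the definition of the join and of the neighborhood $N(v)$. The only point deserving a word of care is the verification that every edge required to build the join, namely each spoke from $v$ to an endpoint of the matching, is actually present in $G$; but this is exactly what membership of those endpoints in $N(v)$ guarantees. The lemma thus serves mainly as a clean reformulation, for later use, of the non-containment of $F_\ell$ in terms of matchings inside neighborhoods.
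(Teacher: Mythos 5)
Your proposal is correct and is exactly the paper's argument: the paper's one-line proof observes that $\ell K_2\subset G[N(v)]$ forces $F_\ell\subset G[N[v]]$, contradicting the standing assumption $F_\ell\not\subset G$, and your write-up simply unpacks the same construction of the join $K_1+\ell K_2$ with hub $v$. No gaps; the extra verification of the spoke edges is precisely what membership in $N(v)$ gives, as you note.
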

\begin{proof}
If $\ell K_2\subset G[N(v)]$,
then $F_\ell\subset G[N[v]]$. This is a contradiction.
\end{proof}

The next lemma is a special case of Stahl's lemma~\cite{Stahl}.
\begin{lem}
\label{matching}
For any $\ell\ge1$, we have $R(\ell K_2,K_6)=2\ell+4$.
\end{lem}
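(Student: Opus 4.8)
The plan is to prove the equality by establishing the two bounds separately. For the lower bound $R(\ell K_2,K_6)\ge 2\ell+4$, I would exhibit a graph on $2\ell+3$ vertices containing neither $\ell K_2$ nor an independent set of order $6$. The graph $G=K_{2\ell-1}\cup 4K_1$ does the job: its largest matching has size $\ell-1$ (the four isolated vertices contribute no edges), so $\ell K_2\not\subset G$, while $\alpha(G)=1+4=5$, so $K_6\not\subset\overline{G}$.

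For the upper bound, I would take an arbitrary graph $G$ of order $2\ell+4$ with $\alpha(G)\le 5$ and show that it contains a matching of size $\ell$. The natural tool is the Berge--Tutte deficiency formula, which writes the maximum matching size in terms of $\mathrm{def}(G)=\max_{S\subset V}\bigl(o(G-S)-|S|\bigr)$, where $o(G-S)$ denotes the number of odd components of $G-S$. Since $|V|=2\ell+4$, a maximum matching leaves exactly $\mathrm{def}(G)$ vertices unsaturated, so it suffices to prove $\mathrm{def}(G)\le 4$.

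The key step is to bound $\mathrm{def}(G)$ using the hypothesis $\alpha(G)\le 5$. If some $S$ attained $o(G-S)-|S|\ge 5$, then $o(G-S)\ge|S|+5$, and selecting one vertex from each odd component of $G-S$ yields a set of pairwise non-adjacent vertices of order $o(G-S)$, hence $\alpha(G)\ge o(G-S)$. This only gives $\alpha(G)\ge 5$, which is not yet a contradiction, so the decisive ingredient is a parity observation: for every $S$ one has $o(G-S)-|S|\equiv|V|\equiv 0\pmod 2$, so $\mathrm{def}(G)$ is always even. Consequently $\mathrm{def}(G)\ge 5$ forces $\mathrm{def}(G)\ge 6$, which produces an independent set of order $o(G-S)\ge|S|+6\ge 6$ and contradicts $\alpha(G)\le 5$. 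Therefore $\mathrm{def}(G)\le 4$, and Berge--Tutte gives a matching of size at least $\tfrac{1}{2}(2\ell+4-4)=\ell$, as required.

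I expect the main obstacle to be precisely this parity refinement. The independent set extracted from the odd components has order exactly $o(G-S)$, which on its own merely meets the threshold $\alpha(G)=5$; without noticing that the Berge--Tutte deficiency is forced to be even on an even-order graph, the argument would stall on the boundary case $\mathrm{def}(G)=5$. Once the parity is in hand, the remainder is a routine application of the deficiency formula together with the elementary construction for the lower bound.
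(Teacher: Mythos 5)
Your proof is correct, but it is a genuinely different route from the paper's: the paper does not prove this lemma at all, instead citing it as a special case of Stahl's general theorem determining $r(F,K_m)$ for an arbitrary forest $F$ (indeed the relevant formula $R(\ell K_2,K_m)=2\ell+m-2$ is classical). Your lower-bound construction $K_{2\ell-1}\cup 4K_1$ is exactly the standard extremal graph, and your upper bound via the Berge--Tutte deficiency is sound: the parity observation $o(G-S)-|S|\equiv|V(G)|\pmod 2$ is correct (each odd component contributes an odd summand to $|V(G)|-|S|$), it does rule out the boundary value $\mathrm{def}(G)=5$, and picking one vertex from each of the $\ge 6$ odd components gives an independent $6$-set since vertices in distinct components of $G-S$ are nonadjacent in $G$. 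So $\mathrm{def}(G)\le 4$ and $\nu(G)\ge\frac{1}{2}(2\ell+4-4)=\ell$, as you say. Two remarks on the comparison. First, your argument is self-contained where the paper's is not, which is a genuine plus; the citation buys the paper brevity and the full generality of Stahl's forest theorem, which the paper does not need. Second, the Berge--Tutte machinery, while correct, is heavier than necessary: a one-line induction on $\ell$ also works --- given $G$ of order $2\ell+4$ with $\alpha(G)\le 5$, note $G$ has an edge $xy$ (as $|V(G)|>5$), and $G-\{x,y\}$ has order $2(\ell-1)+4$, so by induction it contains $(\ell-1)K_2$, which together with $xy$ yields $\ell K_2$; the base case $\ell=1$ is the trivial $R(K_2,K_6)=6$. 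Your parity refinement is precisely what the deficiency route requires, and you identified it correctly, but the inductive shortcut sidesteps the boundary case entirely.
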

\begin{proof}
See~\cite[pp.~586--587]{Stahl}.
\end{proof}

As a consequence of Lemma~\ref{matching} and $\alpha(G)=5$,
any subgraph $H\subset G$ of order $n\ge6$
contains $\lfloor n/2\rfloor-2$ or more independent edges.
The following is an immediate consequence
of Theorem~\ref{Chen_Zhang} and Lemma~\ref{matching}.

\begin{lem}
\label{degree_lemma}
We have $2\ell\le\delta(G)\le\Delta(G)\le2\ell+3$.
\end{lem}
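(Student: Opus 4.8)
The plan is to establish the two nontrivial inequalities $\Delta(G)\le 2\ell+3$ and $\delta(G)\ge 2\ell$ separately; the middle inequality $\delta(G)\le\Delta(G)$ is trivial.

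For the upper bound, fix any vertex $v$ and examine $G[N(v)]$. The first lemma of this section guarantees that $G[N(v)]$ contains no $\ell K_2$, while $\alpha(G[N(v)])\le\alpha(G)=5$ means that $\overline{G[N(v)]}$ contains no $K_6$. If we had $d(v)=|N(v)|\ge 2\ell+4$, then since $R(\ell K_2,K_6)=2\ell+4$ by Lemma~\ref{matching}, the graph $G[N(v)]$ would be forced to contain $\ell K_2$, a contradiction. Hence $d(v)\le 2\ell+3$ for every $v$, that is, $\Delta(G)\le 2\ell+3$.

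For the lower bound I argue by contradiction. Suppose some vertex $v$ has $d(v)\le 2\ell-1$, and set $M=V\setminus N[v]$, so that $|M|=10\ell-d(v)\ge 8\ell+1$. Since $v$ is nonadjacent to every vertex of $M$ and $\alpha(G)=5$, we have $\alpha(G[M])\le 4$. Moreover each $w\in M$ satisfies $d_M(w)\le 2\ell+2$: otherwise $G[N_M(w)]$ would be a graph on at least $2\ell+3=R(\ell K_2,K_5)$ vertices with independence number at most $4$, so by Stahl's lemma~\cite{Stahl} it would contain $\ell K_2$ inside $N(w)$, producing $F_\ell$ at $w$. Thus $G[M]$ is a graph on at least $8\ell+1$ vertices with $\alpha(G[M])\le 4$ and maximum degree at most $2\ell+2$. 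The goal is then to extract from $M$ a clique of order at least $2\ell+1$: such a clique $Q$ yields $F_\ell$, since any $u\in Q$ has the clique $Q\setminus\{u\}\supseteq K_{2\ell}\supseteq \ell K_2$ inside $N(u)$. I would obtain this clique from the fact that $\overline{G[M]}$ is $K_5$-free (as $\alpha(G[M])\le 4$) with minimum degree at least $|M|-1-(2\ell+2)\ge 6\ell-2$. A structure theorem of Andr\'asfai--Erd\H{o}s--S\'os type forces a $K_5$-free graph to be $4$-partite once its minimum degree exceeds $\tfrac{8}{11}$ of its order; equivalently $M$ splits into four cliques of $G$, and pigeonhole yields one of order at least $\lceil(8\ell+1)/4\rceil=2\ell+1$.

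The main obstacle is exactly closing this numerical gap. The crude greedy alternative---repeatedly passing to the non-neighbourhood of a vertex inside $M$ so as to lower the independence number from $4$ down to $1$---produces only a clique of order about $2\ell-8$, which falls just short of the threshold $2\ell+1$. Making the four-clique decomposition work in the tight regime $|M|=8\ell+1$, in particular for the smaller values $6\le\ell\le 15$ where the Andr\'asfai--Erd\H{o}s--S\'os inequality is not automatic, is where the additional insight into the Chen--Zhang structure must be invested: one needs to exploit how the cliques partitioning $M$ interact with $N(v)$ together with the bounded-degree condition in order to force a clique of order $2\ell+1$ and hence the desired copy of $F_\ell$.
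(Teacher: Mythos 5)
Your upper-bound argument is exactly the paper's: $\alpha(G)=5$ together with $R(\ell K_2,K_6)=2\ell+4$ (Lemma~\ref{matching}) forces $d(v)\le 2\ell+3$ for every $v$, and that part is complete. The lower bound, however, has a genuine gap, which you have in effect flagged yourself. After you set $M=V\setminus N[v]$ with $|M|\ge 8\ell+1$ and observe $\alpha(G[M])\le 4$, you are standing precisely at the hypotheses of Theorem~\ref{Chen_Zhang}: $\alpha(G[M])\le 4$ means $\overline{G[M]}$ contains no $K_5$, and $|M|\ge 8\ell+1=R(F_\ell,K_5)$, so $G[M]$ contains $F_\ell$ outright and the proof is finished in one line. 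This is the entire point of the numerology $10\ell+1-2\ell=8\ell+1$: deleting the closed neighbourhood of a low-degree vertex drops the independence number by one and leaves exactly enough vertices to invoke the already-settled case $n=5$ of the conjecture. No large clique, and no structural decomposition of $M$, is needed.

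Your substitute program does not close. First, it aims for a clique of order $2\ell+1$ in $M$, a far stronger intermediate conclusion than the needed copy of $F_\ell$; the Ramsey bound $R(F_\ell,K_5)=8\ell+1$ produces $F_\ell$ without forcing any clique of that order, and nothing in your hypotheses ($|M|\ge 8\ell+1$, $\alpha(G[M])\le 4$, degrees in $M$ at most $2\ell+2$) obviously forces one either. Second, the Andr\'asfai--Erd\H{o}s--S\'os step requires the minimum degree of $\overline{G[M]}$ to exceed $\tfrac{8}{11}|M|$; with $|M|=8\ell+1$ and your bound $6\ell-2$ this holds only for $\ell\ge 16$, and you explicitly leave $6\le\ell\le 15$ unresolved --- exactly the boundary cases where the theorem is tightest. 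As written, then, the proposal proves the lemma for no value of $\ell$ below $16$ and only modulo an uncited structure theorem above it. The repair is not ``additional insight into the Chen--Zhang structure'' but simply citing the Chen--Zhang theorem itself (Theorem~\ref{Chen_Zhang}) at the point where you have $|M|\ge 8\ell+1$ and $\alpha(G[M])\le 4$; your auxiliary degree bound $d_M(w)\le 2\ell+2$, while correct via $R(\ell K_2,K_5)=2\ell+3$, then becomes unnecessary.
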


\begin{proof}
First we show that $\delta(G)\ge 2\ell$. 
Assume, to the contrary, that there exists a vertex $v$ with $d(v)\le 2\ell-1$. 
Let $S= V(G)\backslash N[v]$. 
Then $|S|\ge 8\ell+1$. 
Also, since $\alpha(G)= 5$, it follows that $\alpha(G[S])\le 4$. 
Then $G[S]$ contains $F_{\ell}$ by Theorem~\ref{Chen_Zhang}, a contradiction.
To show that $\Delta(G)\le 2\ell+3$, suppose that $d(v)\ge 2\ell+4$ for some vertex $v$. 
Since $\alpha(G)= 5$, Lemma~\ref{matching} guarantees the existence of $\ell K_2$ in $G[N(v)]$, producing $F_{\ell}$ in $G$. 
This cannot occur and so $\Delta(G)\le 2\ell+3$, as claimed.
\end{proof}

The next lemma estimates the clique number $\omega(G)$.
\begin{lem}
\label{large_clique}
We have $\omega(G)\le2\ell-2$.
\end{lem}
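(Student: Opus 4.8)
The plan is to argue by contradiction: I assume $\omega(G)\ge 2\ell-1$ and fix a maximum clique $K$, so that $|K|=\omega(G)\in\{2\ell-1,2\ell\}$ — the bound $\omega(G)\le 2\ell$ being immediate, since a clique on $2\ell+1$ vertices would place $\ell K_2$, hence $F_\ell$, inside the neighbourhood of any of its vertices. For each $v\in K$ I write $A_v=N(v)\setminus K$, so that $N(v)=(K\setminus\{v\})\cup A_v$; by Lemma~\ref{degree_lemma} the set $A_v$ is small, with $|A_v|=d(v)-(|K|-1)\le 2\ell+3-(|K|-1)$. The engine of the argument is the fact that no neighbourhood $G[N(v)]$ contains $\ell K_2$: since $K\setminus\{v\}$ is a clique on $|K|-1\ge 2\ell-2$ vertices, it already carries a matching of size $\ell-1$, so any extra independent edge inside $N(v)$ would complete an $\ell K_2$, and hence an $F_\ell$. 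Spelling this out, for $|K|=2\ell$ it forces $A_v$ to be independent and to have \emph{no} edge to $K\setminus\{v\}$, so that every vertex outside $K$ has at most one neighbour in $K$; for $|K|=2\ell-1$ it forces $A_v$ to be independent and the bipartite graph between $A_v$ and $K\setminus\{v\}$ to have matching number at most $1$.

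Once this local structure is in hand, I would pass to $H=G[V(G)\setminus K]$, which has order $10\ell+1-|K|\ge 8\ell+1$. Since $\alpha(G)=5$ we have $\alpha(H)\le 5$, and I split on this value. If $\alpha(H)\le 4$, then $\overline{H}$ contains no $K_5$, so by Theorem~\ref{Chen_Zhang} (that is, $R(F_\ell,K_5)=8\ell+1$) the subgraph $H$, and therefore $G$, already contains $F_\ell$ — a contradiction. Hence $\alpha(H)=5$, and I fix an independent set $I\subseteq V(G)\setminus K$ with $|I|=5$. The goal is then to produce a vertex $u\in K$ adjacent to no vertex of $I$: this makes $I\cup\{u\}$ independent of size $6$, contradicting $\alpha(G)=5$. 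Such a $u$ exists precisely when $I$ fails to dominate $K$, i.e.\ when $\bigl|\bigcup_{w\in I}(N(w)\cap K)\bigr|\le |K|-1$.

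In the case $|K|=2\ell$ this last step is immediate: each of the five vertices of $I$ has at most one neighbour in $K$, so their neighbourhoods cover at most $5<2\ell=|K|$ vertices of $K$, and the desired $u$ exists. This already disposes of $\omega(G)=2\ell$, so that $\omega(G)\le 2\ell-1$, and from here I may assume $|K|=2\ell-1$ with $K$ maximal.

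The main obstacle is exactly the case $|K|=2\ell-1$. Here the matching condition only forbids a \emph{second} independent edge from $A_v$ to $K\setminus\{v\}$, so a single outside vertex is permitted up to $2\ell-2$ neighbours in $K$, and one can no longer bound $\bigl|\bigcup_{w\in I}(N(w)\cap K)\bigr|$ by a crude degree count. I expect this to be the heart of the proof. One must analyse the K\"onig covers of the bipartite graphs between the sets $A_v$ and $K$ — distinguishing, for each $v$, whether all cross-edges are covered by a single vertex of $K$ or by a single vertex of $A_v$ — and combine this with $\alpha(G)=5$ and the exact count $|V(G)|=10\ell+1$ to show that the five vertices of $I$ cannot dominate all of $K$, extracting an $F_\ell$ or an independent set of size $6$ in every configuration where they would. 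This delicate global bookkeeping, rather than any single local move, is where the additional insight on the Chen--Zhang structure is needed.
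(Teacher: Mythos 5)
There is a genuine gap. Your reduction to $|K|\in\{2\ell-1,2\ell\}$ and your disposal of $|K|=2\ell$ are correct, but the lemma asserts $\omega(G)\le 2\ell-2$, so its entire content is the exclusion of a clique of order $2\ell-1$ (indeed, since any larger clique contains one of order exactly $2\ell-1$, the paper treats only that case and your $|K|=2\ell$ analysis is superfluous). Precisely at this point you stop at a plan rather than a proof, and the plan as stated cannot close. The only structure you extract for $|K|=2\ell-1$ is that each $A_v$ is independent and the bipartite graph between $A_v$ and $K\setminus\{v\}$ has matching number at most one; but this local information is consistent with the five vertices of $I$ dominating all of $K$. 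For instance, take $x_1,x_2\in I$ with disjoint neighbourhoods $W_1,W_2\subset K$ satisfying $W_1\cup W_2=K$: for every $v\in W_i$ all cross-edges incident to $x_i$ are covered by the single vertex $x_i\in A_v$, so every per-$v$ K\"onig condition holds, and no $\ell K_2$ arises in $G[N(w)]$ for $w\in K$ either, since $G[(K\setminus\{w\})\cup\{x_i\}]$ has only $2\ell-1$ vertices and hence matching number at most $\ell-1$. So no amount of per-vertex cover bookkeeping alone produces the contradiction; a global ingredient is missing, and you never supply it.

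The paper's actual mechanism is an extremal choice that your sketch lacks. Given a clique $H$ of order $2\ell-1$, it picks $v_0\notin V(H)$ maximizing $t=|N(v_0)\cap V(H)|$ and takes the independent $5$-set $U$ inside $G-H-v_0$ (of order $8\ell+1$, so $\alpha=5$ by Theorem~\ref{Chen_Zhang}, as in your setup). Since $U\cup\{v\}$ cannot be independent for any $v$, at least $2\ell-1$ edges join $H$ to $U$, so some $u_0\in U$ has at least $3$ neighbours in $H$, whence $t\ge 3$. The pivotal claim, which strengthens your matching-number observation, is that if $v\in V(G-H-v_0)$ shares a neighbour $w\in V(H)$ with $v_0$, then $N(v)\cap V(H)=\{w\}$: a second neighbour $w'$ would yield $\ell K_2\subset G[N(w)]$, namely $(\ell-1)K_2$ inside $G[(V(H)\setminus\{w,w'\})\cup\{v_0\}]$ (here $t\ge 3$ is used to match $v_0$ into $H$) together with the independent edge $vw'$. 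This claim assigns to each of the $t$ vertices of $N(v_0)\cap V(H)$ a private vertex of $U$ whose only $H$-neighbour it is; since $u_0$ is not among these, $t\in\{3,4\}$, and the remaining $5-t$ vertices of $U$ must dominate the other $2\ell-1-t$ clique vertices, forcing some $u\in U$ with $(2\ell-1-t)/(5-t)>t$ neighbours in $H$ and contradicting the maximality of $v_0$. This extremal device is exactly the ``delicate global bookkeeping'' your proposal defers; without it, your attempt proves only the weaker and essentially trivial bound $\omega(G)\le 2\ell-1$.
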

\begin{proof}
Assume, to the contrary, that $G$ contains a clique $H$ of order $2\ell-1$. 
Select a vertex $v_0\in V(G)\backslash V(H)$ such that 
\[
|N(v_0)\cap V(H)|
=
\max\{\,|N(v)\cap V(H)|\mid v\in V(G)\backslash V(H)\,\}.
\]
The graph $G-H-v_0$ is of order $8\ell+1$
so that $\alpha(G-H-v_0)=5$ by Theorem~\ref{Chen_Zhang}.
Let $U$ be a $5$-set of independent vertices in the graph $G_0= G-H-v_0$. 

Since $U\cup \{v\}$ cannot be independent for each $v\in V(G)\backslash U$, there are at least $2\ell-1$ edges between $H$ and $U$. 
Hence, there exists a vertex $u_0\in U$ with $|N(u_0)\cap V(H)|\ge (2\ell-1)/5> 2$. 
Consequently, $|N(v_0)\cap V(H)|\ge |N(u_0)\cap V(H)|\ge 3$. 

Next we show that, for each vertex $v\in V(G_0)$, if $w\in N(v_0)\cap N(v)\cap V(H)$, then $N(v)\cap V(H)= \{w\}$. 
If this is not the case, say there exists a vertex $w'\in N(v)\cap V(H)$ with $w\ne w'$, then $(\ell-1)K_2$ in $G[(V(H)\backslash \{w,w'\})\cup \{v_0\}]$ and the edge $vw'$ form $\ell K_2$ in $G[N(w)]$, which is impossible. 

Now, let $S_1= N(v_0)\cap V(H)= \{w_1,w_2,\ldots,w_t\}$, where $t= |N(v_0)\cap V(H)|$.
Then by the above observations,
we can find a $t$-set $U_1= \{u_1,u_2,\ldots,u_t\}\subset U$
such that $N(u_i)\cap V(H)= \{w_i\}$ for $1\le i\le t$. 
Also, let $S_2=V(H)\setminus S_1$ and $U_2= U\backslash U_1$. 
Recall that $|N(u_0)\cap V(H)|\ge 3$, so $u_0\in U_2$. 
Hence, $t= 3,4$. 

Note that there is no edge between $U_1$ and $S_2$
and also note that $U\cup\{v\}$ cannot be independent for each $v\in S_2$.
Thus there are at least $|S_2|=2\ell-1-t$ edges between $U_2$ and $S_2$.
However then, 
\[
\frac{11-t}{5-t}
\le
\frac{2\ell-1-t}{|U_2|}
\le
\max_{u\in U_2}|N(u)\cap V(H)|
\le
|N(v_0)\cap V(H)|= t,
\]
which cannot occur. 
This completes the proof.
\end{proof}

\section{Structural observation}
In this section,
we give some observation on the structure of the graph $G$
following the argument of Chen and Zhang~\cite{Chen_Zhang}.
Let
\[U=\{u_1,u_2,u_3,u_4,u_5\}\subset V(G)\]
be a 5-set of independent vertices and let
\[
X_i:=\Set{v\in V(G)|{|N(v)\cap U|}=i}
\]
for $1\le i\le 5$.
Since $\alpha(G)\le 5$,
we have a partition
\[
V(G)=U\sqcup X_1\sqcup X_2\sqcup X_3\sqcup X_4\sqcup X_5.
\]

\subsection{On the sets $X_i$}
Obviously, we have
\begin{equation}
\label{X_total}
\sum_{i=1}^5|X_i|=10\ell-4
\end{equation}
and, since $d(u_i)\le\Delta(G)\le2\ell+3$ by Lemma~\ref{degree_lemma}, we have
\begin{equation}
\label{bridge_whole_X}
\sum_{i=1}^{5}i|X_i|
=\sum_{i=1}^{5}d(u_i)\le10\ell+15.
\end{equation}
We will use (\ref{X_total}) and (\ref{bridge_whole_X})
throughout this paper.
For example,
by (\ref{X_total}) and (\ref{bridge_whole_X})
we have
\begin{equation}
\label{X1_lower}
|X_1|
\ge
\sum_{i=1}^5(2-i)|X_i|+3|X_5|
\ge
10\ell-23+3|X_5|,
\end{equation}
which implies $|X_1|\ge10\ell-23$.
Also, we have
\begin{equation}
\label{X2_estimate} 
|X_2|
\ge
\sum_{i=1}^{5}(3-i)|X_i|-2|X_1|
\ge
20\ell-27-2|X_1|.
\end{equation}

Let $I=\{1,2,3,4,5\}$.
For each pair $i,j\in I$,
we define the set
$X_{ij}=X_i\cap N(u_j)$.
Then we have a partition
\[
X_1=X_{11}\sqcup X_{12}\sqcup X_{13}\sqcup X_{14}\sqcup X_{15}.
\]
We next find that each $X_{1i}$ induces a clique in $G$.
\begin{lem}
\label{X_1i_clique}
For each $i\in I$, the graph $G[X_{1i}]$ is complete.
Consequently, we have
\[
|X_{1i}|\le2\ell-3,\quad
|X_1|\le10\ell-15.
\]
\end{lem}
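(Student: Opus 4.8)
The plan is to separate the statement into its two parts and prove completeness first, since both size bounds follow immediately once we know each $G[X_{1i}]$ is a clique. First I would fix $i\in I$ and take two arbitrary vertices $v,w\in X_{1i}$, aiming to show $vw\in E(G)$. The crucial feature of membership in $X_{1i}$ is that each of $v$ and $w$ has exactly one neighbor in $U$, namely $u_i$; in particular neither $v$ nor $w$ is adjacent to any $u_j$ with $j\neq i$. This is the only property of $X_{1i}$ the argument will use.

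The key observation is that any non-edge inside $X_{1i}$ manufactures an independent set that is too large. Supposing for contradiction that $vw\notin E(G)$, I would consider the six-vertex set
\[
(U\setminus\{u_i\})\cup\{v,w\}=\Set{u_j|j\in I,\ j\neq i}\cup\{v,w\}.
\]
Here the four vertices $u_j$ with $j\neq i$ are pairwise non-adjacent because $U$ is independent; the vertices $v$ and $w$ are non-adjacent to every such $u_j$ precisely because their unique neighbor in $U$ is $u_i$; and $v,w$ are non-adjacent to each other by the contradiction hypothesis. Hence this is an independent set of order $6$, contradicting $\alpha(G)=5$. Therefore $G[X_{1i}]$ is complete for every $i\in I$.

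For the consequences I would exploit that $X_{1i}\subset N(u_i)$, so $u_i$ is adjacent to every vertex of the clique $X_{1i}$, making $X_{1i}\cup\{u_i\}$ a clique of order $|X_{1i}|+1$. Lemma~\ref{large_clique} gives $\omega(G)\le2\ell-2$, so $|X_{1i}|+1\le2\ell-2$, i.e.\ $|X_{1i}|\le2\ell-3$; summing over the partition $X_1=X_{11}\sqcup\cdots\sqcup X_{15}$ then yields $|X_1|\le5(2\ell-3)=10\ell-15$. I do not expect a serious obstacle in this lemma: the whole argument is a direct exploitation of the definitions of $X_{1i}$ and $N(u_i)$ together with the two standing facts $\alpha(G)=5$ and $\omega(G)\le2\ell-2$. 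The only point requiring care is verifying \emph{every} pairwise non-adjacency in the six-element set, where the essential cases are that $v$ and $w$ avoid all of $U\setminus\{u_i\}$ — which is exactly what $v,w\in X_{1i}$ encodes.
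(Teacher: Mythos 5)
Your proposal is correct and takes essentially the same route as the paper: the same $6$-set $(U\setminus\{u_i\})\cup\{v,w\}$, which cannot be independent since $\alpha(G)=5$, forces the edge $vw$, and the numerical bounds follow exactly as in the paper by noting that $X_{1i}\cup\{u_i\}$ is a clique of order $|X_{1i}|+1\le\omega(G)\le 2\ell-2$ and summing over the partition $X_1=X_{11}\sqcup\cdots\sqcup X_{15}$. Your version merely spells out the pairwise non-adjacencies in the $6$-set more explicitly than the paper does.
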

\begin{proof}
Let $v,v'\in X_{1i}$. 
Since the $6$-set $(U\backslash \{u_i\})\cup \{v,v'\}$ cannot be independent,
it follows that $v$ and $v'$ are adjacent. 
Therefore, $G[X_{1i}\cup \{u_i\}]$ is a complete graph of order
$|X_{1i}|+1\le\omega(G)\le 2\ell-2$. 
In addition, $|X_1|= \sum_{i=1}^{5}|X_{1i}|\le 10\ell-15$.
\end{proof}

Assume, without loss of generality, that
\[
|X_{11}|\ge|X_{12}|\ge|X_{13}|\ge|X_{14}|\ge|X_{15}|
\]
in the rest of our discussion.
Thus (\ref{X1_lower}) implies that
\begin{equation}
\label{X1i_lower}
|X_{15}|=|X_1|-\sum_{i=1}^4|X_{1i}|\ge2\ell-11\ge1.
\end{equation}
In particular, each $X_{1i}$ is nonempty.
The next lemma on the sets $X_{1i}$ is immediate yet useful.
We will use this lemma mainly with the choice $t=2$.

\begin{lem}
\label{flower_lemma}
Let $i,j,k\in I$ be distinct indices and $v\in X_{1i}$.
Suppose that $|X_{1i}|\ge 2\ell-2t$ for some integer with $2\le t\le6$.
Then we have
$|N(v)\cap(X_{1j}\cup X_{1k})|\le2t$ and
$|N(v)\cap X_{1j}|\le2t-1$.
In addition, if $|N(v)\cap X_{1j}|\ge2$ also holds,
then we have $|N(v)\cap X_{1k}|\le2t-3$.
\end{lem}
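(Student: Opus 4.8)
The plan is to exploit the first lemma of Section~3: since $F_\ell\not\subset G$, the induced graph $G[N(v)]$ cannot contain $\ell K_2$, so every matching inside $N(v)$ has at most $\ell-1$ edges. I would therefore build a large matching in $G[N(v)]$ out of three vertex-disjoint cliques and compare its size with $\ell-1$. The natural candidates are
\[
C=(X_{1i}\setminus\{v\})\cup\{u_i\},\qquad A=N(v)\cap X_{1j},\qquad B=N(v)\cap X_{1k}.
\]
Here $C\subset N(v)$ because $X_{1i}$ is a clique containing $v$ (Lemma~\ref{X_1i_clique}) and $v\in X_{1i}\subset N(u_i)$, while $A$ and $B$ lie in $N(v)$ by definition; each of the three sets induces a clique (again by Lemma~\ref{X_1i_clique}), and since $i,j,k$ are distinct and $u_i\notin X_{1j}\cup X_{1k}$, the sets $C,A,B$ are pairwise disjoint. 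This is the ``flower'' picture underlying the name: $v$ sits at the centre, and $C,A,B$ are the petals.

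A clique on $m$ vertices contains $\lfloor m/2\rfloor$ independent edges, so combining maximum matchings inside $C$, $A$, and $B$ produces a matching in $G[N(v)]$ of size $\lfloor|X_{1i}|/2\rfloor+\lfloor|A|/2\rfloor+\lfloor|B|/2\rfloor$, where $|C|=|X_{1i}|$. Matching this against the bound $\ell-1$ from the previous paragraph, and using the hypothesis $|X_{1i}|\ge 2\ell-2t$ (which gives $\lfloor|X_{1i}|/2\rfloor\ge\ell-t$ since $2\ell-2t$ is even), I would obtain the single clean inequality
\[
\left\lfloor\frac{|A|}{2}\right\rfloor+\left\lfloor\frac{|B|}{2}\right\rfloor\le t-1.
\]

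All three assertions then fall out of this one inequality by elementary floor bookkeeping. For the first, $|A|+|B|\le 2\lfloor|A|/2\rfloor+2\lfloor|B|/2\rfloor+2\le 2t$; for the second, discarding the $B$-term gives $|A|\le 2\lfloor|A|/2\rfloor+1\le 2t-1$; and for the third, $|A|\ge 2$ forces $\lfloor|A|/2\rfloor\ge 1$, hence $\lfloor|B|/2\rfloor\le t-2$ and $|B|\le 2t-3$. I expect the only real content to be the choice of the three disjoint cliques and the verification that all of them sit inside $N(v)$; once the matching of size $\lfloor|X_{1i}|/2\rfloor+\lfloor|A|/2\rfloor+\lfloor|B|/2\rfloor$ is in hand, the rest is routine. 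The hypotheses $2\le t\le 6$ and $\ell\ge 6$ serve only to keep $2\ell-2t$ a sensible nonnegative lower bound for $|X_{1i}|$, so they should cause no difficulty.
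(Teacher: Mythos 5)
Your proposal is correct and takes essentially the same route as the paper: the paper's proof likewise observes that the clique $(X_{1i}\setminus\{v\})\cup\{u_i\}\subset N(v)$ contains $(\ell-t)K_2$, so that $G[N(v)\cap(X_{1j}\cup X_{1k})]$ must avoid $tK_2$, and then derives all three bounds from the fact that $X_{1j}$ and $X_{1k}$ induce cliques. The only difference is presentational: the paper declares the final step ``immediate,'' while you carry out the floor-function bookkeeping $\lfloor|A|/2\rfloor+\lfloor|B|/2\rfloor\le t-1$ explicitly, which is exactly the intended argument.
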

\begin{proof}
Note that $(\ell-t) K_2\subset G[(X_{1i}\backslash \{v\})\cup \{u_i\}]$. 
Thus, we must avoid $tK_2$ in $G[N(v)\cap (X_{1j}\cup X_{1k})]$
and so the result is immediate as each of $X_{1j}$ and $X_{1k}$ induces a clique.
\end{proof}

\subsection{On the sets $N(u_i)\backslash X_{1i}$}
Besides the same kind of information as Chen and Zhang obtained,
we need some new information on their structural setting.
We start with some observations
on the set $N(u_i)\backslash X_{1i}$.
By Lemma~\ref{degree_lemma} and Lemma~\ref{X_1i_clique}
\[
|N(u_i)\backslash X_{1i}|=d(u_i)-|X_{1i}|\ge 3.
\]

In general,
suppose that $A$ and $B$ are disjoint nonempty sets of vertices in a graph. 
Then it is well-known that,
if $|N(v)\cap B|\ge |A|$ for every $v\in A$,
then there are at least $|A|$ independent edges between $A$ and $B$. 
The following is then immediate.

\begin{lem}
\label{t_matching}
Let $i\in I$ and $t= 2\ell-|X_{1i}|$. 
Every $t$-set $S\subset N(u_i)\backslash X_{1i}$ contains a vertex $v$ for which $|N(v)\cap X_{1i}|< t$. 
\end{lem}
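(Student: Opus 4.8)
The plan is to argue by contradiction and to manufacture $\ell K_2$ inside $G[N(u_i)]$, which together with $u_i$ would produce the forbidden $F_\ell$. So I would suppose that some $t$-set $S\subset N(u_i)\backslash X_{1i}$ violates the conclusion, i.e.\ that $|N(v)\cap X_{1i}|\ge t$ for \emph{every} $v\in S$. The first step is to feed $A=S$ and $B=X_{1i}$ into the well-known matching fact recalled just before the lemma: since $|A|=t$ and each $v\in A$ satisfies $|N(v)\cap B|\ge t=|A|$, there are at least $t$ independent edges between $S$ and $X_{1i}$, so I may fix a matching $M$ of size $t$ saturating $S$.

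The second step is to recognise that this matching is already ``half'' of an $\ell K_2$ and to enlarge it using the clique structure. The matching $M$ occupies exactly $t$ vertices of $X_{1i}$, leaving $|X_{1i}|-t$ vertices untouched; by Lemma~\ref{X_1i_clique} these span a clique and hence contribute $\lfloor(|X_{1i}|-t)/2\rfloor$ further independent edges, all disjoint from $M$ and all lying inside $X_{1i}\subset N(u_i)$. As $S\subset N(u_i)$ too, the edges of $M$ also live in $N(u_i)$, so altogether I collect $t+\lfloor(|X_{1i}|-t)/2\rfloor$ independent edges of $G[N(u_i)]$. The calibration $t=2\ell-|X_{1i}|$ is precisely what makes this count reach $\ell$: a useful observation is that the contrary hypothesis is self-correcting, since $M$ needs $t$ distinct endpoints in $X_{1i}$ it already forces $|X_{1i}|\ge t$, equivalently $|X_{1i}|\ge\ell$; thus $|X_{1i}|-t=2\ell-2t\ge0$ is even and yields exactly $\ell-t$ leftover edges. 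The total is then $t+(\ell-t)=\ell$, giving $\ell K_2\subset G[N(u_i)]$ and hence $F_\ell\subset G[N[u_i]]$, the desired contradiction.

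The main obstacle is conceptual rather than computational: one must notice that the threshold $t=2\ell-|X_{1i}|$ is tuned so that a size-$t$ cross-matching plus the residual clique edges inside $X_{1i}$ assemble into an $\ell$-matching, and that the contrary hypothesis automatically delivers the inequality $|X_{1i}|\ge t$ that keeps the residual count nonnegative. I would also remark in passing that the statement is not vacuous: Lemma~\ref{degree_lemma} gives $d(u_i)\ge2\ell$, so $|N(u_i)\backslash X_{1i}|=d(u_i)-|X_{1i}|\ge t$, and $t$-sets $S$ genuinely exist.
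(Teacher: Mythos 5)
Your proposal is correct and follows essentially the same route as the paper: assume every $v\in S$ has $|N(v)\cap X_{1i}|\ge t$, extract a matching of size $t$ between $S$ and $X_{1i}$ via the matching fact stated before the lemma, and combine it with $(\ell-t)K_2$ from the clique on the remaining $2\ell-2t$ vertices of $X_{1i}$ to produce the forbidden $\ell K_2$ in $G[N(u_i)]$. Your parity and nonvacuousness remarks are fine but add nothing beyond the paper's argument.
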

\begin{proof}
If $S= \{v_1,v_2,\ldots,v_t\}\subset N(u_i)\backslash X_{1i}$
and $|N(v_j)\cap X_{1i}|\ge t$ for $1\le j\le t$, then $X_{1i}$ contains $t$ distinct vertices $w_1,w_2,\ldots,w_t$ such that $v_jw_j\in E(G)$ for $1\le j\le t$. 
However then, $(\ell-t)K_2$ in $G[X_{1i}\backslash \{w_1,w_2,\ldots,w_t\}]$ with the $t$ edges $v_jw_j$ $(1\le j\le t)$ forms $\ell K_2$ in $G[N(u_i)]$, a contradiction.
\end{proof}

Let $i\in I$. 
Under certain conditions, there exists a $4$-set $Q_i\subset V(G)$ satisfying 
\begin{equation}
\label{Q_cond}
\text{%
$Q_i\subset N(u_i)\backslash X_{1i}$
and
$Q_i\cup \{w\}$ is an independent $5$-set for every $w\in X_{1i}$.%
} 
\end{equation} 
For example, such $Q_i$ exists if the degree of $u_i$ equals $2\ell+3$,
which we verify next. 

\begin{lem}
\label{Q_lemma}
Let $i\in I$.
If $d(u_i)= 2\ell+3$, then there exists a $4$-set $Q_i$
satisfying \upshape{(\ref{Q_cond})}. 
\end{lem}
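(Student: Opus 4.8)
The plan is to work inside the neighborhood of $u_i$ and to translate \eqref{Q_cond} into an independence statement. Set $A:=N(u_i)\backslash X_{1i}$, so that $|A|=d(u_i)-|X_{1i}|=2\ell+3-|X_{1i}|$ by hypothesis and Lemma~\ref{X_1i_clique}. Call a vertex $v\in A$ \emph{good} if $N(v)\cap X_{1i}=\emptyset$. I would first observe that finding $Q_i$ amounts to finding an independent $4$-set of good vertices: since any prospective $Q_i$ lies in $A$ and is disjoint from $X_{1i}$, the set $Q_i\cup\{w\}$ has five distinct vertices for each $w\in X_{1i}$, and it is independent for every such $w$ precisely when $Q_i$ is independent and no vertex of $Q_i$ has a neighbor in $X_{1i}$. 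Thus the whole task reduces to producing four mutually non-adjacent good vertices in $A$.

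The main engine is a matching count. Because $F_\ell\not\subset G$, the induced graph $G[N(u_i)]$ contains no $\ell K_2$, so its matching number is at most $\ell-1$. Fixing a maximum matching $M$ of $G[N(u_i)]$, the unmatched vertices form an independent set of size at least $(2\ell+3)-2(\ell-1)=5$. Since $X_{1i}$ is a clique by Lemma~\ref{X_1i_clique}, at most one unmatched vertex lies in $X_{1i}$, so at least four unmatched vertices lie in $A$. This already yields an independent $4$-set contained in $A$; what remains is to guarantee that these four vertices are good, i.e.\ anticomplete to $X_{1i}$.

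The hard part is exactly this upgrade, and it is where I expect the real work to be. I would select the maximum matching $M$ extremally, for instance among all maximum matchings choosing one that leaves the fewest unmatched vertices of $A$ possessing a neighbor in $X_{1i}$. If some unmatched $v\in A$ were still bad, with $v$ adjacent to $w'\in X_{1i}$, then $w'$ is matched (as the unmatched set is independent), and I would reroute along the edge $vw'$: replacing $w'$'s matching edge by $vw'$ either creates a second unmatched vertex of the clique $X_{1i}$ (impossible for a maximum matching, since two unmatched clique vertices would be adjacent), or reduces the count of bad unmatched $A$-vertices, contradicting the extremal choice. The delicate point is that the partner of $w'$ may again lie in $A$ and again be bad, so this is genuinely an augmenting-path-style analysis; controlling the case where several bad vertices share their few neighbors in $X_{1i}$ (so that they cannot be matched into $X_{1i}$ simultaneously) is the crux, and here I would fall back on the clique structure of $X_{1i}$, the bound $\alpha(G)=5$, and Lemma~\ref{flower_lemma} to close the chains. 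Once every unmatched $A$-vertex is forced to be good, any four of them serve as the required $Q_i$.
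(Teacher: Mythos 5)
Your reduction and your opening matching count are sound: condition (\ref{Q_cond}) is indeed equivalent to finding four pairwise non-adjacent vertices of $A=N(u_i)\backslash X_{1i}$ having no neighbors in $X_{1i}$; a maximum matching of $G[N(u_i)]$ has at most $\ell-1$ edges, so it misses at least $(2\ell+3)-2(\ell-1)=5$ vertices (in fact exactly $5$, since the missed set is independent and $\alpha(G)=5$), and at most one missed vertex lies in the clique $X_{1i}$. But the decisive step --- upgrading the four missed $A$-vertices to be anticomplete to $X_{1i}$ --- is the entire content of the lemma, and your proposal does not prove it: you present it as a plan and explicitly leave its crux open. Concretely, the swap move does not terminate as described. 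When a bad unmatched $v\in A$ is swapped in along $vw'$, the displaced partner $p$ of $w'$ may itself be a bad vertex of $A$; then the count of bad unmatched $A$-vertices does not decrease, no contradiction with the extremal choice arises, and iterating produces chains for which you supply no potential function or augmenting-path invariant guaranteeing termination. Moreover, the tool you propose to fall back on, Lemma~\ref{flower_lemma}, is inapplicable here: it bounds the neighbors in $X_{1j}\cup X_{1k}$ of a vertex \emph{of} $X_{1i}$, whereas your bad vertices lie in $A\subset X_2\cup X_3\cup X_4\cup X_5$, about whose adjacencies into $X_{1i}$ that lemma says nothing.

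The paper avoids this machinery by applying Lemma~\ref{matching} to $G[A]$ alone and splitting on the parity of $|X_{1i}|$. If $|X_{1i}|=2t-1$, then $G[A]$ contains $(\ell-t)K_2$ with four leftover vertices, and since $G[X_{1i}\backslash\{w\}]$ supplies $(t-1)K_2$ for each $w\in X_{1i}$, any edge meeting the leftover $4$-set together with $w$ would complete $\ell K_2$ in $G[N(u_i)]$; goodness is automatic. If $|X_{1i}|=2t$, then $G[A]$ leaves five vertices forming a set $Q$ that must be independent; every $w\in X_{1i}$ has a neighbor in $Q$ (else $Q\cup\{w\}$ is an independent $6$-set), while two independent edges between $X_{1i}$ and $Q$ would complete $\ell K_2$, so all such edges pass through a single common vertex $v_5$, which is simply discarded. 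This even case also shows that your intended invariant (``every unmatched $A$-vertex is good'') is the wrong target to aim at directly: one leftover vertex may well be adjacent to \emph{all} of $X_{1i}$, and the clean fix is to tolerate one bad vertex and delete it, not to force it into the matching by rerouting. Even if your invariant can be salvaged after the fact, your argument does not establish it, so as written the proof is incomplete at its central point; carrying out the parity analysis above is the direct repair.
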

\begin{proof}
Note that $|N(u_i)\backslash X_{1i}|= d(u_i)-|X_{1i}|\ge 6$
since $d(u_i)=2\ell+3$.

Suppose first that $|X_{1i}|$ is odd, say $|X_{1i}|= 2t-1$ for some positive integer $t$.
Then $G[N(u_i)\backslash X_{1i}]$ contains $(\ell-t) K_2$
by Lemma~\ref{matching} with four remaining vertices $v_1,v_2,v_3,v_4$. 
Let $Q_i= \{v_1,v_2,v_3,v_4\}$.
Then for any $w\in X_{1i}$,
$G[X_{1i}\backslash \{w\}]$ contains $(t-1) K_2$ as $G[X_{1i}]$ is complete.
Thus $Q_i\cup \{w\}$ must be independent
in order to avoid $\ell K_2$ in $G[N(u_i)]$.

Suppose next that $|X_{1i}|= 2t$ for some positive integer $t$.
Then $G[N(u_i)\backslash X_{1i}]$ contains $(\ell-t-1) K_2$
again by Lemma~\ref{matching} with five remaining vertices $v_1,v_2,\ldots,v_5$. 
Since $G[X_{1i}]$ obviously contains $t K_2$,
the set $Q= \{v_1,v_2,\ldots,v_5\}$ must be independent
to avoid $\ell K_2$ in $G[N(u_i)]$. 
For any $w\in X_{1i}$,
$|N(w)\cap Q|\ge1$ since $Q\cup\{w\}$ cannot be independent.
Also, there cannot be two or more independent edges
between $X_{1i}$ and $Q$ to avoid $\ell K_2$ in $G[N(u_i)]$.
Thus we can find a vertex in $Q$, say $v_5$,
such that every $w\in X_{1i}$ is adjacent to $v_5$.
Let $Q_i=\{v_1,v_2,v_3,v_4\}$.
Recall that $|X_{1i}|=2t\ge2$.
Then $Q_i\cup\{w\}$ must be independent for every $w\in X_{1i}$
to avoid two independent edges between $X_{1i}$ and $Q$.
\end{proof}

\subsection{On the sets $X_2$ and $X_3$}
We next study the sets $X_2$ and $X_3$ in more detail.
We start with the following lemma,
which can be seen as an analogue of Lemma~\ref{X_1i_clique}.
\begin{lem}
\label{bridge_lemma}
Let $i,j,k\in I$ be distinct indices.
\begin{enumerate}
\item[\rm (a)]
If $a\in X_{1i}$, $b\in X_{1j}$, $c\in X_{2i}\cap X_{2j}$, then the set $\{a,b,c\}$ is not independent. 
\item[\rm (b)]
If $a\in X_{1i}$, $b\in X_{1j}$, $c\in X_{1k}$, $d\in X_{3i}\cap X_{3j}\cap X_{3k}$, then the set $\{a,b,c,d\}$ is not independent. 
\end{enumerate}
\end{lem}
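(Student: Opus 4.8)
The plan is to handle both parts by the same device already used in the proof of Lemma~\ref{X_1i_clique}: assume the putative set is independent, adjoin to it exactly those vertices of $U$ whose indices do not appear in the hypothesis, and derive an independent $6$-set, contradicting $\alpha(G)=5$. The bookkeeping is arranged so that the number of ``free'' indices in $I$ precisely complements the size of the candidate set to reach $6$.

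For part (a), I would suppose to the contrary that $\{a,b,c\}$ is independent and set $\{p,q,r\}=I\backslash\{i,j\}$, the three remaining indices. The membership conditions dictate the neighborhoods of $a,b,c$ inside $U$ completely: since $a\in X_{1i}$ it is $U$-adjacent only to $u_i$, since $b\in X_{1j}$ only to $u_j$, and since $c\in X_{2i}\cap X_{2j}$ it lies in $X_2$ with its two $U$-neighbors being exactly $u_i$ and $u_j$. Hence none of $a,b,c$ is adjacent to any of $u_p,u_q,u_r$. As $U$ is independent, the six vertices $a,b,c,u_p,u_q,u_r$ would then form an independent $6$-set. These are pairwise distinct because $X_1,X_2$ and $U$ are disjoint and $X_{1i}\cap X_{1j}=\emptyset$, so this contradicts $\alpha(G)=5$.

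For part (b), the argument is identical but one index tighter. Assuming $\{a,b,c,d\}$ independent, I set $\{m,n\}=I\backslash\{i,j,k\}$. The memberships $a\in X_{1i}$, $b\in X_{1j}$, $c\in X_{1k}$ force $a,b,c$ to avoid $u_m,u_n$, and $d\in X_{3i}\cap X_{3j}\cap X_{3k}$ forces $d$ to have its three $U$-neighbors equal to $u_i,u_j,u_k$, so $d$ too avoids $u_m,u_n$. Then $a,b,c,d,u_m,u_n$ would be an independent $6$-set, again contradicting $\alpha(G)=5$; distinctness follows as before since $X_1,X_3,U$ are disjoint and the three vertices in $X_1$ have distinct unique $U$-neighbors.

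I do not expect a genuine obstacle here; the lemma is essentially a counting identity, with $3$ free indices complementing $\{a,b,c\}$ in~(a) and $2$ free indices complementing $\{a,b,c,d\}$ in~(b). The only points needing care are that the listed vertices are pairwise distinct and that each $X_i$- or $X_{ij}$-membership really does preclude adjacency to the complementary $u$'s, both of which are immediate from the definitions of $X_i$ and $X_{ij}$.
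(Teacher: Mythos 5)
Your proposal is correct and is essentially the paper's own proof: the authors likewise observe that the $6$-set $(U\backslash\{u_i,u_j\})\cup\{a,b,c\}$ (respectively $(U\backslash\{u_i,u_j,u_k\})\cup\{a,b,c,d\}$) cannot be independent since $\alpha(G)=5$, which forces an edge among the candidate vertices. Your added verification of distinctness and of the non-adjacency to the remaining $u$'s is a harmless elaboration of details the paper leaves implicit.
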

\begin{proof}
First, (a) is immediate since the $6$-set $(U\backslash \{u_i,u_j\})\cup \{a,b,c\}$ cannot be independent. 
Similarly, (b) holds by considering the $6$-set $(U\backslash \{u_i,u_j,u_k\})\cup \{a,b,c,d\}$.
\end{proof}

The next lemma is an analogue of Claim 1 of Chen and Zhang~\cite{Chen_Zhang}.
\begin{lem}
\label{intersection_lemma}
Let $i,j,k\in I$ be distinct indices.
\begin{enumerate}
\item[\rm (a)]
If $|X_{1i}|= |X_{1j}|= 2\ell-3$, then $X_{2i}\cap X_{2j}= \emptyset$. 
\item[\rm (b)]
If $|X_{1i}|= |X_{1j}|= |X_{1k}|= 2\ell-3$, then $X_{3i}\cap X_{3j}\cap X_{3k}= \emptyset$. 
\end{enumerate}
\end{lem}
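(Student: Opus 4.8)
The plan is to prove both parts by contradiction, in each case showing that the assumed ``bridge'' vertex has a neighborhood containing $\ell K_2$, which produces $F_\ell\subset G$. For part (a), I would suppose there is a vertex $c\in X_{2i}\cap X_{2j}$ and set $A=N(c)\cap X_{1i}$ and $B=N(c)\cap X_{1j}$. By Lemma~\ref{bridge_lemma}(a), $\{a,b,c\}$ is not independent for every $a\in X_{1i}$ and $b\in X_{1j}$; since $c$ is nonadjacent to the vertices of $X_{1i}\setminus A$ and of $X_{1j}\setminus B$, this forces every vertex of $X_{1i}\setminus A$ to be adjacent to every vertex of $X_{1j}\setminus B$. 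I would first note that both $X_{1i}\setminus A$ and $X_{1j}\setminus B$ are nonempty: if, say, $A=X_{1i}$, then $X_{1i}\cup\{u_i,c\}$ would be a clique of order $2\ell-1$, contradicting Lemma~\ref{large_clique}.

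To finish (a), I would pick $v\in X_{1j}\setminus B$; it is complete to $X_{1i}\setminus A$, so $|N(v)\cap X_{1i}|\ge(2\ell-3)-|A|$, while Lemma~\ref{flower_lemma} with $t=2$ (applicable since $|X_{1j}|=2\ell-3\ge2\ell-4$) gives $|N(v)\cap X_{1i}|\le3$. Hence $|A|\ge2\ell-6$, and symmetrically $|B|\ge2\ell-6$. Now $N(c)$ contains the two vertex-disjoint cliques $A\cup\{u_i\}$ and $B\cup\{u_j\}$, each of order at least $2\ell-5$, so $G[N(c)]$ contains at least $2\lfloor(2\ell-5)/2\rfloor=2\ell-6\ge\ell$ independent edges for $\ell\ge6$, a contradiction.

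For part (b), I would proceed analogously: assume $d\in X_{3i}\cap X_{3j}\cap X_{3k}$, set $A=N(d)\cap X_{1i}$, $B=N(d)\cap X_{1j}$, $C=N(d)\cap X_{1k}$, and write $P=X_{1i}\setminus A$, $Q=X_{1j}\setminus B$, $R=X_{1k}\setminus C$, each nonempty by the same clique-order argument. Lemma~\ref{bridge_lemma}(b) now yields the weaker tripartite condition that no triple in $P\times Q\times R$ is independent. The key observation I would extract is that if one part, say $P$, satisfies $|P|\ge7$, then for each $b\in Q$ and $c\in R$ their at most $3+3$ neighbors in $P$ leave a common nonneighbor $a\in P$, forcing $b$ adjacent to $c$; thus $Q$ is complete to $R$, and Lemma~\ref{flower_lemma} then bounds $|Q|,|R|\le3$, giving $|B|,|C|\ge2\ell-6$. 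In that case the two cliques $B\cup\{u_j\}$ and $C\cup\{u_k\}$ in $N(d)$ already yield $2\ell-6\ge\ell$ independent edges. Otherwise all three of $|P|,|Q|,|R|$ are at most $6$, so $|A|,|B|,|C|\ge2\ell-9$, and the three disjoint cliques $A\cup\{u_i\}$, $B\cup\{u_j\}$, $C\cup\{u_k\}$ in $N(d)$ give at least $3(\ell-4)\ge\ell$ independent edges for $\ell\ge6$; either way $\ell K_2\subset G[N(d)]$, the desired contradiction.

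The routine inputs throughout are Lemma~\ref{flower_lemma} with $t=2$ (cross-edges between distinct $X_{1\bullet}$'s number at most $3$) and Lemma~\ref{large_clique} (to guarantee the complements $P,Q,R$ are nonempty). The main obstacle I anticipate is part (b): the bridge condition there is only a \emph{tripartite} nonindependence, so one cannot directly conclude completeness between two fixed parts as in (a). The crux is the covering argument isolating a large part $P$ to force completeness of the remaining two, and then verifying that the resulting matching bounds $2\ell-6\ge\ell$ and $3(\ell-4)\ge\ell$ both hold precisely when $\ell\ge6$, which is where the numerics are tightest.
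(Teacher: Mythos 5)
Your proof is correct, but it follows a genuinely different route from the paper's, most notably in part (b). For (a) you run the paper's tension in the opposite direction: the paper first bounds $|N(c)\cap(X_{1i}\cup X_{1j})|\le 2\ell-2$ by a matching count, deduces that the non-neighborhoods are large, and then contradicts Lemma~\ref{flower_lemma} via Lemma~\ref{bridge_lemma}; you instead use Lemma~\ref{bridge_lemma} plus Lemma~\ref{flower_lemma} to force the non-neighborhoods to have size at most $3$, and then assemble $\ell K_2$ inside $G[N(c)]$ from the two cliques $A\cup\{u_i\}$ and $B\cup\{u_j\}$ of order at least $2\ell-5$ each --- same ingredients, inverted flow, with the contradiction landing on $F_\ell$-freeness rather than on the flower bound. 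For (b) the difference is substantive: the paper pigeonholes the non-neighborhood of $d$ into parts of sizes at least $6$, $4$, and $1$, then runs a delicate labeled-vertex chase (using in particular the second clause of Lemma~\ref{flower_lemma}, that two neighbors in one part force at most one in another) to manufacture $\ell K_2$ in $G[N(b_1)]$ for a vertex $b_1\in X_{1j}$; you replace this with a clean dichotomy on $|P|,|Q|,|R|$: if some part has at least $7$ vertices, the $3+3$ flower bound yields a common non-neighbor and Lemma~\ref{bridge_lemma}~(b) forces the other two parts to be completely joined, whence both have size at most $3$ and two cliques of order at least $2\ell-5$ give $2\ell-6\ge\ell$ independent edges in $N(d)$; otherwise all parts have size at most $6$ and three cliques of order at least $2\ell-8$ give $3(\ell-4)\ge\ell$. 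I checked the details: the nonemptiness of $P,Q,R$ via $\omega(G)\le 2\ell-2$ (Lemma~\ref{large_clique}) is needed and correctly supplied (in the first branch it is what licenses applying Lemma~\ref{flower_lemma} to bound $|Q|$ and $|R|$), the applicability condition $|X_{1j}|=2\ell-3\ge 2\ell-4$ for $t=2$ holds throughout, and all three numerical inequalities ($2\ell-6\ge\ell$ twice and $3(\ell-4)\ge\ell$) hold exactly for $\ell\ge6$, tight at $\ell=6$ just as the paper's own estimates are. What each approach buys: yours is shorter, symmetric in $i,j,k$, avoids the second clause of the flower lemma entirely, and has a uniform endgame ($\ell K_2$ always lands in the neighborhood of the bridge vertex $c$ or $d$); the paper's vertex-level argument, while fussier, is the style that it later adapts under the weaker size hypotheses of Lemma~\ref{Q_intersect}~(b), where exact sizes $2\ell-3$ are unavailable and your dichotomy thresholds would need re-tuning.
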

\begin{proof}
Let us begin with (a) by assuming, to the contrary, that $c\in X_{2i}\cap X_{2j}$. 
Since $G[N(c)]$ does not contain $\ell K_2$
while $G[X_{1i}\cup \{u_i\}]\cong G[X_{1j}\cup \{u_j\}]\cong K_{2\ell-2}$,
it follows that $|N(c)\cap (X_{1i}\cup X_{1j})|\le 2\ell-2$. 
Hence, $|(X_{1i}\cup X_{1j})\backslash N(c)|\ge 2\ell-4$. 
Without loss of generality, assume that $|X_{1i}\backslash N(c)|\ge (2\ell-4)/2= \ell-2\ge 4$. 
Note that $X_{1j}\backslash N(c)\ne\emptyset$ as $\omega(G)\le 2\ell-2$. 
Let $b\in X_{1j}\backslash N(c)$. 
By Lemma~\ref{bridge_lemma}~(a) then,
$b$ is adjacent to every vertex in $X_{1i}\backslash N(c)$,
i.e.\ $|N(b)\cap X_{1i}|\ge 4$, contradicting Lemma~\ref{flower_lemma}. 
This proves the assertion (a).

For (b), suppose that $d\in X_{3i}\cap X_{3j}\cap X_{3k}$. 
Then by a similar argument done in (a),
one sees that $|(X_{1i}\cup X_{1j}\cup X_{1k})\backslash N(d)|\ge 4\ell-7$
in order to avoid $\ell K_2$ in $G[N(d)]$. 
Without loss of generality, suppose that
$|X_{1i}\backslash N(d)|\ge |X_{1j}\backslash N(d)|\ge |X_{1k}\backslash N(d)|$. 
Then
\[
6
\le
\lceil{(4\ell-7)/3}\rceil
\le
|X_{1i}\backslash N(d)|
\le
|X_{1i}|\le 2\ell-3
\]
and
\[
|X_{1j}\backslash N(d)|
\ge
(4\ell-7-|X_{1i}|)/2
\ge
\ell-2
\ge
4.
\]
Also, $X_{1k}\backslash N(d)\ne\emptyset$ in order to avoid $K_{2\ell-1}$ in $G$.
Let
\[
\{a_1,a_2,\ldots,a_6\}\subset X_{1i}\backslash N(d),\ \ 
\{b_1,b_2,b_3,b_4\}\subset X_{1j}\backslash N(d)\ \ 
\text{and}\ \ 
c\in X_{1k}\backslash N(d).
\]
By Lemma~\ref{flower_lemma},
none of $|N(c)\cap X_{1i}|,|N(c)\cap X_{1j}|,|N(b_1)\cap X_{1i}|$ exceeds $3$. 
By this fact with Lemma~\ref{bridge_lemma}~(b),
we may assume that $b_1c\notin E(G)$, and $N(c)\cap X_{1i}=\{a_1,a_2,a_3\}$.
Then again by Lemma~\ref{flower_lemma},
at most one of $b_2,b_3,b_4$ is adjacent to $c$
and so suppose that $b_2c\notin E(G)$. 
Thus, by Lemma~\ref{bridge_lemma},
$N(b_1)\cap X_{1i}= N(b_2)\cap X_{1i}= \{a_4,a_5,a_6\}$.
This gives us $(\ell-2) K_2$ in $G[(X_{1j}\backslash \{b_1,b_2\})\cup \{u_j\}]$
with two edges $a_4a_5$ and $a_6b_2$ in $G[N(b_1)]$,
which is impossible.
\end{proof}

We let
\[
J:=\Set{i\in I|
\text{a subset $Q_i\subset N(u_i)\backslash X_{1i}$ satisfying (\ref{Q_cond}) exists}
}.
\]
The next lemma describes how the sets $Q_i$ intersect each other.

\begin{lem}
\label{Q_intersect}\mbox{}
\begin{enumerate}
\item[\rm (a)]
Let $i,j\in J$ be indices with $i<j$. 
If $|X_{1i}|+|X_{1j}|\ge 2\ell-1$, then we have $Q_i\cap Q_j\cap X_2= \emptyset$. 
Consequently, if $|X_{14}|+|X_{15}|\ge 2\ell-1$, then
\[\sum_{i\in J}|Q_i\cap X_2|\le |X_2|.\]
\item[\rm (b)]
Let $i,j,k\in J$ be indices with $i<j<k$. 
If $|X_{1i}|+|X_{1j}|+|X_{1k}|\ge 6\ell-14$,
then $Q_i\cap Q_j\cap Q_k\cap X_3= \emptyset$. 
Consequently,
if $|X_{13}|+|X_{14}|+|X_{15}|\ge 6\ell-14$,
then
\[\sum_{i\in J}|Q_i\cap X_3|\le 2|X_3|. \]
\end{enumerate}
\end{lem}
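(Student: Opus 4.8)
The plan is to prove both main implications by contradiction, assuming the offending vertex exists, and then to deduce the two ``Consequently'' statements as purely combinatorial corollaries using the size ordering $|X_{11}|\ge\cdots\ge|X_{15}|$.

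For part (a), suppose $v\in Q_i\cap Q_j\cap X_2$. Since $v\in Q_i\subset N(u_i)$ and $v\in Q_j\subset N(u_j)$ while $v\in X_2$, the two neighbours of $v$ in $U$ are exactly $u_i,u_j$, so $v\in X_{2i}\cap X_{2j}$. The defining property (\ref{Q_cond}) of $Q_i$ and $Q_j$ forces $v$ to be non-adjacent to every vertex of $X_{1i}\cup X_{1j}$. Now for arbitrary $a\in X_{1i}$ and $b\in X_{1j}$, Lemma~\ref{bridge_lemma}~(a) says $\{a,b,v\}$ is not independent; as $av,bv\notin E(G)$, this forces $ab\in E(G)$. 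Hence every cross pair is an edge, and since $G[X_{1i}]$ and $G[X_{1j}]$ are complete by Lemma~\ref{X_1i_clique}, the whole set $X_{1i}\cup X_{1j}$ induces a clique of order $|X_{1i}|+|X_{1j}|\ge2\ell-1$. This contradicts $\omega(G)\le2\ell-2$ from Lemma~\ref{large_clique}, proving $Q_i\cap Q_j\cap X_2=\emptyset$.

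The two ``Consequently'' assertions then drop out formally. If $|X_{14}|+|X_{15}|\ge2\ell-1$, the ordering gives $|X_{1i}|+|X_{1j}|\ge2\ell-1$ for every pair $i<j$, so the main implication makes the sets $\{Q_i\cap X_2\}_{i\in J}$ pairwise disjoint, whence $\sum_{i\in J}|Q_i\cap X_2|\le|X_2|$. Likewise, once (b) is established, $|X_{13}|+|X_{14}|+|X_{15}|\ge6\ell-14$ forces the triple hypothesis for every $i<j<k$, so no vertex of $X_3$ lies in three of the $Q_i$, and counting incidences gives $\sum_{i\in J}|Q_i\cap X_3|\le2|X_3|$.

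For part (b), suppose $v\in Q_i\cap Q_j\cap Q_k\cap X_3$. As before $v\in X_{3i}\cap X_{3j}\cap X_{3k}$ and $v$ is non-adjacent to all of $X_{1i}\cup X_{1j}\cup X_{1k}$, so Lemma~\ref{bridge_lemma}~(b) says that no triple $a\in X_{1i}$, $b\in X_{1j}$, $c\in X_{1k}$ can be independent; that is, the three cliques admit no independent transversal. I will contradict this by exhibiting one. Writing $p\ge q\ge r$ for $|X_{1i}|,|X_{1j}|,|X_{1k}|$, the bound $p+q+r\ge6\ell-14$ together with $|X_{1m}|\le2\ell-3$ (Lemma~\ref{X_1i_clique}) gives $p\ge2\ell-4$, $q\ge2\ell-5$, $r\ge2\ell-8$. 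These sizes activate Lemma~\ref{flower_lemma} with $t=2,3,4$ respectively, so a vertex of $X_{1i}$ has at most $3$ neighbours in any other clique, a vertex of $X_{1j}$ at most $5$, and a vertex of $X_{1k}$ at most $7$, with the sharper combined and conditional bounds of Lemma~\ref{flower_lemma} also available. The construction picks $c\in X_{1k}$ first, uses the conditional bound to guarantee enough non-neighbours of $c$ in at least one of the two large cliques, and then selects $a\in X_{1i}$ and $b\in X_{1j}$ lying outside $N(c)$ with $a\not\sim b$, using the strong bounds $3$ and $5$ there; this produces the forbidden independent transversal.

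I expect the genuine difficulty to lie entirely in part (b), in the tight and unbalanced size configurations that are already visible at $\ell=6$, such as $(p,q,r)=(9,9,4)$ or $(9,7,6)$, where the smallest clique is close to $2\ell-8$ and its vertices carry only the weak out-degree bound $7$. Handling these requires a short case split on the conditional bound of Lemma~\ref{flower_lemma}: when $|N(c)\cap X_{1i}|\le1$ one picks $b$ first and then $a$, and otherwise the conditional bound forces $|X_{1j}\setminus N(c)|$ to be large enough to pick $b$ after $a$. A final subtlety is that a particular $c$ may be adjacent to all of a small $X_{1j}$; this can occur only in a narrow range ($q=2\ell-5$ with $\ell=6$) and is ruled out by re-selecting $c$, since complete adjacency of every $c\in X_{1k}$ to $X_{1j}$ would violate the flower bound on a vertex of $X_{1j}$. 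Once these cases are dispatched, the independent transversal contradicts Lemma~\ref{bridge_lemma}~(b) and completes the proof.
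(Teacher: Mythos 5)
Your part (a) is correct and is essentially the paper's own proof: non-adjacency from (\ref{Q_cond}) plus Lemma~\ref{bridge_lemma}~(a) makes $G[X_{1i}\cup X_{1j}]$ a clique of order $\ge 2\ell-1$, contradicting Lemma~\ref{large_clique}; the two ``Consequently'' statements follow from the size ordering exactly as you say. Your part (b) also has the right setup (the vertex $d$ is non-adjacent to all three cliques, so Lemma~\ref{bridge_lemma}~(b) forbids an independent transversal, and your bounds $|X_{1i}|\ge2\ell-4$, $|X_{1j}|\ge2\ell-5$, $|X_{1k}|\ge2\ell-8$ match the paper's). But your plan for deriving the contradiction---greedily exhibiting a transversal by picking $c$, then $a\in X_{1i}\setminus N(c)$, then $b\in X_{1j}\setminus(N(c)\cup N(a))$, with the stated dichotomy on $|N(c)\cap X_{1i}|\le 1$ versus $\ge2$---has a genuine quantitative gap. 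Take $\ell=6$ and $(|X_{1i}|,|X_{1j}|,|X_{1k}|)=(2\ell-3,2\ell-4,2\ell-7)=(9,8,5)$, which satisfies the hypothesis $\ge 6\ell-14=22$. Here only $t=4$ applies to $c\in X_{1k}$ in Lemma~\ref{flower_lemma}, so in your second branch the conditional bound gives only $|N(c)\cap X_{1j}|\le 2t-3=5$, and after choosing $a$ your count is $|X_{1j}\setminus(N(c)\cup N(a))|\ge 8-5-3=0$: the greedy pick can fail, and swapping the roles of the two large cliques still leaves the subcase $|N(c)\cap X_{1i}|,|N(c)\cap X_{1j}|\ge2$ uncovered. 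Your ``re-select $c$'' remark addresses a different (and in fact vacuous) worry and does not repair this.

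The paper closes exactly this hole by not hunting for the transversal vertex-by-vertex. It splits on $|X_{1k}|\le 2\ell-7$ versus $|X_{1k}|\ge 2\ell-6$; in the first case both large cliques have size $\ge 2\ell-4$, and instead of fixing one $a$ it lets the forbidden-transversal condition act on \emph{all} non-neighbours at once: for $a\in X_{1i}\setminus N(c)$ and $b\in X_{1j}\setminus N(c)$, Lemma~\ref{bridge_lemma}~(b) forces $X_{1j}\setminus N(a)\subset N(c)$ and $X_{1i}\setminus N(b)\subset N(c)$, whence $|N(c)\cap(X_{1i}\cup X_{1j})|\ge 5+5=10$, contradicting the combined bound $2t=8$ of Lemma~\ref{flower_lemma}. (Equivalently: if no transversal existed, each of the $\ge3$ vertices of $X_{1j}\setminus N(c)$ would have $\ge4>3$ neighbours in $X_{1i}$, violating the $t=2$ flower bound---this ``vary $a$'' step is precisely what your sketch is missing.) In the second case the paper forces specific adjacencies $a_2,a_3\in N(c)\cap X_{1i}$ and $b_2,\dots,b_5\in N(c)\cap X_{1j}$ and contradicts the conditional bound $|N(c)\cap X_{1j}|\le 2t-3=3$ with $t=3$. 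So your approach is the same in spirit but, as written, does not constitute a proof of (b); it needs the counting-of-forced-adjacencies step (or an equivalent) in the tight case $|X_{1k}|\in\{2\ell-8,2\ell-7\}$, $|X_{1j}|=2\ell-4$.
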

\begin{proof}
For (a), suppose that $c\in Q_i\cap Q_j\cap X_2$. 
Then no vertex in $X_{1i}\cup X_{1j}$ is adjacent to $c$ by (\ref{Q_cond}),
so $G[X_{1i}\cup X_{1j}]$ is a clique by Lemma~\ref{bridge_lemma} (a). 
However then, $\omega(G)\ge |X_{1i}|+|X_{1j}|\ge 2\ell-1$. 
As a result, $Q_i\cap Q_j\cap X_2= \emptyset$. 

For (b), suppose that $d\in Q_i\cap Q_j\cap Q_k\cap X_3$.
Then no vertex in $X_{1i}\cup X_{1j}\cup X_{1k}$ is adjacent to $d$
by (\ref{Q_cond}).
By the assumption $|X_{1i}|+|X_{1j}|+|X_{1k}|\ge 6\ell-14$,
we have
\[
2\ell-4\le|X_{1i}|\le2\ell-3,\quad
2\ell-5\le|X_{1j}|\le2\ell-3,\quad
2\ell-8\le|X_{1k}|\le2\ell-3.
\]
We consider two cases separately according to whether $|X_{1k}|\le2\ell-7$ or not.

We first consider the case $|X_{1k}|\le2\ell-7$.
In this case, we have $|X_{1j}|\ge2\ell-4$ by the assumption.
Therefore we have $|X_{1i}|,|X_{1j}|\ge8$.
Take a vertex $c\in X_{1k}$.
By Lemma~\ref{flower_lemma} with $t=4$,
we find that $|N(c)\cap X_{1i}|, |N(c)\cap X_{1j}|\le 7$.
Thus we can take vertices
$a\in X_{1i}\setminus N(c)$ and $b\in X_{1j}\setminus N(c)$.
By Lemma~\ref{flower_lemma} with $t=2$,
we find that $|N(a)\cap X_{1j}|, |N(b)\cap X_{1i}|\le 3$,
i.e. $|X_{1j}\setminus N(a)|, |X_{1i}\setminus N(b)|\ge 5$.
By Lemma~\ref{bridge_lemma}~(b),
this implies $|N(c)\cap(X_{1i}\cup X_{1j})|\ge10$,
contradicting Lemma \ref{flower_lemma} with $t=4$.

We next consider the case $|X_{1k}|\ge2\ell-6$.
In this case, we have $|X_{1i}|\ge8$, $|X_{1j}|\ge7$ and $|X_{1k}|\ge6$.
Thus we can take vertices
\[
\{a_1,a_2,\ldots,a_8\}\subset X_{1i},\quad
\{b_1,b_2,\ldots,b_7\}\subset X_{1j}\quad\text{and}\quad
c\in X_{1k}.
\]
By Lemma~\ref{flower_lemma} with $t=3$,
we find that $|N(c)\cap X_{1i}|, |N(c)\cap X_{1j}|, |N(b_1)\cap X_{1i}|\le 5$.
Thus we may assume that $a_1c, b_1c, a_2b_1, a_3b_1\not\in E(G)$.
By Lemma~\ref{bridge_lemma}~(b),
we find that $a_1\in N(b_1)\cap X_{1i}$ and $a_2,a_3\in N(c)\cap X_{1i}$.
Also, by Lemma~\ref{flower_lemma} with $t=2$,
we see that $|N(a_1)\cap X_{1j}|\le3$.
Thus we may assume that $a_1b_2, a_1b_3, a_1b_4, a_1b_5\not\in E(G)$.
However, again by Lemma~\ref{bridge_lemma}~(b),
we find that $b_2,b_3,b_4,b_5\in N(c)\cap X_{1j}$,
contradicting Lemma \ref{flower_lemma} with $t=3$.
\end{proof}

\subsection{On the set $X_1$}
Based on the above observations,
we next give a closer look at $X_1$.
Recall that $10\ell-23\le|X_1|\le10\ell-15$.
The next lemma gives an improved upper bound for $|X_1|$.

\begin{lem}
\label{X14_reduction}
We have $|X_{14}|\le 2\ell-4$. 
Consequently, $|X_1|\le 10\ell-17$. 
\end{lem}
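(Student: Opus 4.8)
The plan is to argue by contradiction. Since $|X_{14}|\le|X_{13}|\le|X_{12}|\le|X_{11}|\le 2\ell-3$ by Lemma~\ref{X_1i_clique}, it suffices to rule out $|X_{14}|=2\ell-3$, which forces $|X_{11}|=|X_{12}|=|X_{13}|=|X_{14}|=2\ell-3$. The stated consequence $|X_1|\le 10\ell-17$ is then immediate, because $|X_1|=\sum_i|X_{1i}|\le 3(2\ell-3)+2(2\ell-4)$ once $|X_{14}|\le 2\ell-4$ is known. So the whole task is to derive a contradiction from the four cliques $X_{11},X_{12},X_{13},X_{14}$ all having the maximum size $2\ell-3$.

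First I would extract local information at $u_5$. Applying Lemma~\ref{intersection_lemma}~(a) to each pair of indices from $\{1,2,3,4\}$ shows that no vertex of $X_2$ can be adjacent to two of $u_1,u_2,u_3,u_4$; as every $X_2$-vertex has exactly two neighbours in $U$, each must then be adjacent to $u_5$, so $X_2\subseteq N(u_5)$. Likewise Lemma~\ref{intersection_lemma}~(b) applied to triples gives $X_3\subseteq N(u_5)$. Feeding these containments into the degree bound $d(u_5)\le 2\ell+3$ from Lemma~\ref{degree_lemma}, together with (\ref{X_total}), (\ref{bridge_whole_X}) and the identity $|X_1|=8\ell-12+|X_{15}|$, I expect to obtain $|X_4\setminus N(u_5)|\ge 5$. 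Writing $Y=X_4\setminus N(u_5)$, every vertex of $Y$ is adjacent to exactly $u_1,u_2,u_3,u_4$ and to none of $u_5$.

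The heart of the argument is to turn $Y$ into a forbidden independent $6$-set. For each $i\le 4$ the matching $(\ell-2)K_2$ living inside the clique $X_{1i}$ is disjoint from $Y\subseteq N(u_i)$, so any two independent edges inside $G[Y]$ would complete an $\ell K_2$ in $G[N(u_i)]$; hence $G[Y]$ has matching number at most one. If $G[Y]$ is edgeless, then five vertices of $Y$ together with $u_5$ already form an independent $6$-set. Otherwise, after deleting the two endpoints of a single edge, the remaining $\ge 3$ vertices of $Y$ are independent, and the same $\ell K_2$-avoidance shows that each of them is non-adjacent to every vertex of $X_{11}\cup X_{12}\cup X_{13}\cup X_{14}$. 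Finally, since $X_{11}\cup\dots\cup X_{14}$ cannot induce a clique of order $8\ell-12>2\ell-2\ge\omega(G)$ by Lemma~\ref{large_clique}, there are non-adjacent $w\in X_{1i}$, $w'\in X_{1j}$ with $i\ne j$; as $w,w'\in X_1$ they miss $u_5$, and by the detachment they miss the three chosen vertices of $Y$. Together with $u_5$ these six vertices are pairwise non-adjacent, contradicting $\alpha(G)=5$.

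The main obstacle is this last paragraph: the counting of the earlier steps produces only a moderately sized $Y$, and a naive attempt to read an independent set off $Y\cup\{u_5\}$ stalls at five vertices. The decisive idea is to exploit the matching structure that the four \emph{simultaneous} maximum-size cliques impose on $G[Y]$, and then to harvest the two extra independent vertices from two \emph{distinct} cliques $X_{1i},X_{1j}$ using nothing more than the clique-number bound. Making the detachment step precise—that the independent vertices of $Y$ have no neighbours in $X_{11}\cup\dots\cup X_{14}$—is where the $\ell K_2$-bookkeeping has to be carried out with care.
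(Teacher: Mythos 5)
Your proposal is correct, and it opens exactly as the paper does: assume $|X_{11}|=|X_{12}|=|X_{13}|=|X_{14}|=2\ell-3$, invoke Lemma~\ref{intersection_lemma} to force $X_2\cup X_3\subset N(u_5)$, and count against $d(u_5)\le2\ell+3$. Where you genuinely diverge is the endgame. The paper stops at the weaker conclusion $|X_4|\ge5$ and finishes with a second, purely arithmetic step: by (\ref{X_total}), (\ref{bridge_whole_X}) and $|X_1|\le10\ell-15$ one gets $2|X_4|\le\sum_{i=2}^{5}(i-2)|X_i|\le8$, so $|X_4|\le4$, a two-line contradiction. You instead extract the refined count $|X_4\setminus N(u_5)|\ge5$ --- which does follow as you expect, since the disjoint sets $X_{15}$, $X_2$, $X_3$, $X_4\cap N(u_5)$, $X_5$ all lie in $N(u_5)$ while $|X_{15}|+|X_2|+|X_3|+|X_4|+|X_5|=2\ell+8$; note that (\ref{bridge_whole_X}) is not actually needed here, only (\ref{X_total}) and Lemma~\ref{degree_lemma} --- and then you build a forbidden independent $6$-set. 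Each step of that construction checks out: $\nu(G[Y])\le1$ because $2K_2$ in $Y\subset N(u_i)$ together with $(\ell-2)K_2$ in the clique $X_{1i}$ gives $\ell K_2$ in $G[N(u_i)]$; deleting the endpoints of the single matching edge $xy$ leaves at least three independent vertices of $Y$, each of which must miss all of $X_{11}\cup\dots\cup X_{14}$, since an edge $zw$ with $w\in X_{1j}$, together with $xy$ and $(\ell-2)K_2$ in $X_{1j}\setminus\{w\}$, would again produce $\ell K_2$ in $G[N(u_j)]$; and non-adjacent $w\in X_{1i}$, $w'\in X_{1j}$ with $i\ne j\le4$ exist because otherwise Lemma~\ref{bridge_lemma} is not even needed --- the four cliques would merge into a clique of order $8\ell-12$, contradicting Lemma~\ref{large_clique} --- while the edgeless case of $G[Y]$ yields the $6$-set immediately. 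The trade-off: the paper's finish buys brevity and no case analysis, at the price of a second appeal to the global degree-sum bound (\ref{bridge_whole_X}); your finish never reuses that inequality and instead exposes the structural reason the configuration dies (four simultaneous maximum cliques force $X_4\setminus N(u_5)$ to be nearly independent and detached), at the cost of the matching-number case split and the extra $\ell K_2$ bookkeeping.
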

\begin{proof}
If the statement is false, then $|X_{1i}|= 2\ell-3$ for $1\le i\le 4$.
Thus, $X_2,X_3\subset N(u_5)$ by Lemma~\ref{intersection_lemma}. 
Then $X_{15}\cup X_2\cup X_3\cup X_5\subset N(u_5)$
and so $|X_{15}|+|X_2|+|X_3|+|X_5|\le d(u_5)\le 2\ell+3$. 
This then implies that 
\[
10\ell-4
=\sum_{i=1}^{4}|X_{1i}|+(|X_{15}|+|X_2|+|X_3|+|X_5|)+|X_4|
\le10\ell-9+|X_4|,
\]
so that $|X_4|\ge5$. 
On the other hand, using (\ref{X_total}) and (\ref{bridge_whole_X}), 
\begin{align*} 
2|X_4|\le\sum_{i=2}^{5}(i-2)|X_i|\le8, 
\end{align*} 
so that $|X_4|\le 4$. 
Since this is impossible, it follows that $|X_{14}|\le 2\ell-4$ and so
$|X_1|\le 3(2\ell-3)+2(2\ell-4)= 10\ell-17$.
This completes the proof.
\end{proof}

\begin{lem}
\label{Onozuka}
The graph $G$ contains a $5$-set of independent vertices
\[U= \{u_1,u_2,\ldots,u_5\}\subset V(G)\]
for which either
{\rm (A)} $|X_1|\in \{ 10\ell-18,10\ell-17 \}$ or 
{\rm (B)} $X_5\ne \emptyset$,
i.e.\ each of the five vertices in $U$ is adjacent to a common vertex.
\end{lem}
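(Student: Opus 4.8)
The plan is to argue by contradiction: assume that \emph{no} independent $5$-set satisfies (A) or (B), and extract a contradiction from the counting identities (\ref{X_total}) and (\ref{bridge_whole_X}). The crucial point is that the two failure conditions interact, and in particular that the failure of (B) is best exploited \emph{globally} rather than for a single $U$. For one fixed $U$, failing (B) merely says $X_5=\emptyset$. But failing (B) for every independent $5$-set is a statement about every neighbourhood: if some vertex $v_0$ had five pairwise non-adjacent neighbours $W$, then $W$ would be an independent $5$-set all of whose vertices are adjacent to $v_0$, so $v_0$ would lie in the ``$X_5$'' associated to $W$ and (B) would hold for $U=W$. Hence, under the contradiction hypothesis, $\alpha(G[N(v)])\le 4$ for every vertex $v$.

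First I would upgrade this neighbourhood condition into a sharper degree bound. Since $\alpha(G[N(v)])\le 4$ means $\overline{G[N(v)]}$ contains no $K_5$, and since the matching Ramsey number $R(\ell K_2,K_5)=2\ell+3$ is also a special case of Stahl's lemma (exactly as Lemma~\ref{matching} treats the $K_6$ case), any vertex with $d(v)\ge 2\ell+3$ would force $\ell K_2\subset G[N(v)]$ and therefore $F_\ell\subset G[N[v]]$, a contradiction. Thus the contradiction hypothesis yields $\Delta(G)\le 2\ell+2$, strengthening Lemma~\ref{degree_lemma} by one.

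Now I would fix any independent $5$-set $U$, which exists because $\alpha(G)=5$. Since $|X_1|\le 10\ell-17$ always holds by Lemma~\ref{X14_reduction}, the failure of (A) forces $|X_1|\le 10\ell-19$, and then (\ref{X_total}) together with $X_5=\emptyset$ gives $|X_2|+|X_3|+|X_4|\ge 15$. On the other hand, the improved degree bound turns (\ref{bridge_whole_X}) into $\sum_{i=1}^{4} i|X_i|=\sum_{i=1}^{5}d(u_i)\le 5(2\ell+2)=10\ell+10$; subtracting (\ref{X_total}) from this yields $|X_2|+2|X_3|+3|X_4|\le 14$. Comparing the two displayed inequalities forces $|X_3|+2|X_4|\le 14-15=-1$, which is absurd. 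This contradiction establishes the lemma.

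The only genuine content lies in the opening paragraph: recognising that the \emph{universal} failure of (B), rather than its failure for a single $U$, is what pins down $\Delta(G)$. Once $\Delta(G)\le 2\ell+2$ is available the counting collapses at once, so I expect no serious obstacle beyond correctly invoking the $K_5$ matching Ramsey number $R(\ell K_2,K_5)=2\ell+3$, which here plays the role that Lemma~\ref{matching} plays elsewhere in the argument.
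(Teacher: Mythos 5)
Your proposal is correct, and while it shares the paper's numerical skeleton, it replaces the paper's key constructive step by a Ramsey-theoretic black box, so the two routes genuinely differ. The paper argues directly rather than by contradiction: given an independent $5$-set $U$ failing (A), Lemma~\ref{X14_reduction} gives $|X_1|\le 10\ell-19$, and (\ref{X_total}) with (\ref{bridge_whole_X}) gives $\sum_{i=1}^{5}d(u_i)\ge 2(10\ell-4)-|X_1|\ge 10\ell+11$, so some $d(u_i)=2\ell+3$; Lemma~\ref{Q_lemma} then hands over the $4$-set $Q_i$, and $U'=Q_i\cup\{v\}$ with $v\in X_{1i}$ is an independent $5$-set all of whose vertices are adjacent to $u_i$, so $U'$ satisfies (B). Your observation that the \emph{universal} failure of (B) is equivalent to $\alpha(G[N(v)])\le 4$ for every vertex $v$ is exactly the contrapositive of this witness construction, and quoting $R(\ell K_2,K_5)=2\ell+3$ (the $n=5$ case of Stahl's theorem, just as Lemma~\ref{matching} is the $n=6$ case, so the citation is legitimate even though the paper states only the $K_6$ case) lets you bypass Lemma~\ref{Q_lemma} entirely: under your hypothesis $\Delta(G)\le 2\ell+2$, and the pair of bounds $|X_2|+|X_3|+|X_4|\ge 15$ and $|X_2|+2|X_3|+3|X_4|\le 14$ is an immediate contradiction — all your inequalities check out. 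What the paper's longer route buys is the stronger conclusion (\ref{Q_cond}) attached to the sets $Q_i$ (independence of $Q_i\cup\{w\}$ for \emph{every} $w\in X_{1i}$), which is indispensable in the endgame of Section 5; since Lemma~\ref{Q_lemma} must be proved anyway, the paper reuses it here at no cost, whereas your argument is self-contained modulo Stahl and arguably cleaner for this lemma taken in isolation. One cosmetic remark: the global contradiction framing is unnecessary — if some vertex has five pairwise non-adjacent neighbours, (B) holds for that $5$-set; otherwise $\Delta(G)\le 2\ell+2$ forces $|X_1|\ge 2(10\ell-4)-(10\ell+10)=10\ell-18$ for \emph{any} independent $5$-set, which together with Lemma~\ref{X14_reduction} is precisely (A).
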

\begin{proof}
Let $U= \{u_1,u_2,\ldots,u_5\}$ be a $5$-set of independent vertices in $G$
and suppose that $U$ does not satisfy (A).
By Lemma~\ref{X14_reduction},
we have $|X_1|\le10\ell-19$.
Then by (\ref{X_total}) and (\ref{bridge_whole_X}), we have
\[
\sum_{i=1}^5d(u_i)
=
\sum_{i=1}^5i|X_i|
\ge
2\sum_{i=1}^5|X_i|-|X_1|
\ge
10\ell+11
\]
so that $d(u_i)= 2\ell+3$ for some $i\in I$. 
By Lemma~\ref{Q_lemma}, therefore, we may assume the existence of a $4$-set $Q_i$ that satisfies (\ref{Q_cond}). 
Select a vertex $v\in X_{1i}$ and consider the $5$-set $U'= Q_i\cup \{v\}$, which must be independent. 
Note also that each of the five vertices in $U'$ is adjacent to $u_i$. 
Thus, $U'$ satisfies (B).
\end{proof}

For the rest of this paper,
$U$ denotes a fixed 5-set of independent vertices in $G$
satisfying either (A) or (B) in Lemma~\ref{Onozuka}.

\subsection{Additional lemmas when $|X_{13}|=2\ell-3$}
\mbox{}

We prepare some more lemmas on $X_2$ under the condition $|X_{13}|=2\ell-3$.
\begin{lem}
\label{X2_45}
Suppose that $|X_{13}|= 2\ell-3$. 
Then $|X_2|= |X_{24}|+|X_{25}|-|X_{24}\cap X_{25}|$. 
Furthermore, $|X_{24}\cap X_{25}|\le 1$. 
\end{lem}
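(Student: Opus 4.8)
The hypothesis $|X_{13}|=2\ell-3$ forces, by our ordering convention, that $|X_{11}|=|X_{12}|=|X_{13}|=2\ell-3$ are all maximal. By Lemma~\ref{intersection_lemma}~(a), any two of these three indices give $X_{2i}\cap X_{2j}=\emptyset$. The plan is to leverage this triple of disjointness conditions to show that no vertex of $X_2$ can have both of its $U$-neighbors lying in $\{u_1,u_2,u_3\}$, and more generally to control which $X_{2i}$ are forced to be empty, so that the counting of $|X_2|$ collapses to the $X_{24}$, $X_{25}$ contributions by inclusion–exclusion.

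**The counting formula.**
First I would recall that $X_2=\bigsqcup_{i<j}(X_{2i}\cap X_{2j})$ is a partition indexed by the two indices on which a vertex of $X_2$ is adjacent to $U$. The claim $|X_2|=|X_{24}|+|X_{25}|-|X_{24}\cap X_{25}|$ is exactly inclusion–exclusion for $|X_{24}\cup X_{25}|$, so proving it amounts to showing $X_2\subset X_{24}\cup X_{25}$, i.e.\ every vertex of $X_2$ is adjacent to $u_4$ or $u_5$. Equivalently, a vertex of $X_2$ whose two neighbors both lie in $\{u_1,u_2,u_3\}$ cannot exist. That vertex would lie in $X_{2i}\cap X_{2j}$ for some distinct $i,j\in\{1,2,3\}$, which is empty by Lemma~\ref{intersection_lemma}~(a). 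This settles the first assertion.

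**The bound $|X_{24}\cap X_{25}|\le1$.**
For the second assertion, suppose for contradiction that there exist two distinct vertices $c,c'\in X_{24}\cap X_{25}$. Each of $c,c'$ is adjacent to exactly $u_4,u_5$ among $U$, so neither is adjacent to $u_1,u_2,u_3$; combined with Lemma~\ref{bridge_lemma}~(a) applied with each pair from $\{1,2,3\}$, each of $c,c'$ must be adjacent to essentially all of $X_{1i}\cup X_{1j}$ for the relevant cliques. The key is that $c$ (and likewise $c'$) avoids $\ell K_2$ in $G[N(c)]$: since $G[X_{1i}\cup\{u_i\}]\cong K_{2\ell-2}$ for $i=1,2,3$, having $c$ adjacent to too much of these large cliques produces $\ell K_2$. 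I would quantify $|N(c)\cap(X_{11}\cup X_{12}\cup X_{13})|$ against the matching number available there, mirroring the opening estimate in the proof of Lemma~\ref{intersection_lemma}, and then use the presence of \emph{two} such vertices $c,c'$ to extract an independent edge $cc'$ (or a connecting structure) that pushes the matching in some $G[N(\cdot)]$ up to $\ell K_2$.

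**Expected main obstacle.**
The delicate point will be the $|X_{24}\cap X_{25}|\le1$ bound rather than the formula, which is essentially bookkeeping. The difficulty is that a single vertex in $X_{24}\cap X_{25}$ can be accommodated, so the argument must genuinely use the interaction of two such vertices together with all three cliques $X_{11},X_{12},X_{13}$ of size $2\ell-3$. I expect to split into cases according to whether $cc'\in E(G)$, and in each case to find $\ell-1$ independent edges inside the cliques (which is easy since each has $2\ell-3$ vertices, giving matchings of size $\ell-2$) and then adjoin one or two more edges involving $c,c'$ and their clique-neighbors to reach $\ell K_2$ inside $G[N(u_4)]$ or $G[N(c)]$, contradicting Lemma~1 or the $F_\ell$-free assumption. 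Balancing the counts so that the forbidden $\ell K_2$ actually materializes, without over-counting shared vertices among the three cliques, is where the care is needed.
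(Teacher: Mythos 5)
Your treatment of the first assertion is correct and identical to the paper's: since $|X_{11}|=|X_{12}|=|X_{13}|=2\ell-3$, Lemma~\ref{intersection_lemma}~(a) kills $X_{2i}\cap X_{2j}$ for all pairs $i,j\in\{1,2,3\}$, so $X_2\subset X_{24}\cup X_{25}$ and the formula is inclusion--exclusion. But for the second assertion you have only a plan, not a proof, and the plan contains a concrete error: you propose to apply Lemma~\ref{bridge_lemma}~(a) ``with each pair from $\{1,2,3\}$'' to a vertex $c\in X_{24}\cap X_{25}$. The hypothesis of that lemma requires $c\in X_{2i}\cap X_{2j}$ for the pair $i,j$ being used, i.e.\ $c$ adjacent to $u_i$ and $u_j$; but a vertex of $X_{24}\cap X_{25}$ is adjacent to $u_4,u_5$ only and to none of $u_1,u_2,u_3$, so the lemma yields nothing about the cliques $X_{11},X_{12},X_{13}$ --- the only instance available is $i=4$, $j=5$, which concerns $X_{14}$ and $X_{15}$, and those sets need not be large here. (You also assert that $c$ ``must be adjacent to essentially all'' of the big cliques, whereas avoiding $\ell K_2$ in $G[N(c)]$ forces the opposite: $|N(c)\cap(X_{1i}\cup X_{1j})|\le 2\ell-2$, as in the proof of Lemma~\ref{intersection_lemma}.) So the structural engine you intend to run is not available, and your case split on $cc'\in E(G)$ has no identified source of the final $\ell K_2$.

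The paper's actual argument is global rather than local, and this is the missing idea: by Lemma~\ref{intersection_lemma}~(a) and (b) one has $X_2\subset N(u_4)\cup N(u_5)$ \emph{and} $X_3\subset N(u_4)\cup N(u_5)$ (a vertex of $X_3$ avoiding both $u_4,u_5$ would lie in $X_{31}\cap X_{32}\cap X_{33}$), while trivially $X_4,X_5\subset N(u_4)\cup N(u_5)$. Counting with the degree bound of Lemma~\ref{degree_lemma},
\begin{align*}
4\ell+6\ge |N(u_4)|+|N(u_5)|
&\ge |X_{14}|+|X_{15}|+|X_{24}|+|X_{25}|+|X_3|+|X_4|+|X_5|\\
&= |X_{24}\cap X_{25}|+(10\ell-4)-3(2\ell-3)=|X_{24}\cap X_{25}|+4\ell+5,
\end{align*}
whence $|X_{24}\cap X_{25}|\le 1$; the intersection term appears precisely because vertices of $X_{24}\cap X_{25}$ are counted twice in $|N(u_4)|+|N(u_5)|$. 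Note the razor-thin margin ($4\ell+6$ versus $4\ell+5$): the bound is exact numerology coming from $\sum_i|X_i|=10\ell-4$ and $\Delta(G)\le 2\ell+3$, which is a strong indication that a purely local argument about two vertices $c,c'$ and matchings in the cliques --- even if repaired --- would not suffice. Your proposal also never invokes Lemma~\ref{intersection_lemma}~(b), which is indispensable for placing $X_3$ inside $N(u_4)\cup N(u_5)$ in this count.
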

\begin{proof}
The first assertion is immediate by Lemma~\ref{intersection_lemma}~(a). 
By Lemma~\ref{intersection_lemma}~(a)~(b),
we see that $X_2,X_3\subset N(u_4)\cup N(u_5)$. 
Also, obviously $X_4,X_5\subset N(u_4)\cup N(u_5)$. 
Thus
\begin{align*} 
4\ell+6
\ge |N(u_4)|+|N(u_5)|
&\ge |X_{14}|+|X_{15}|+|X_{24}|+|X_{25}|+|X_3|+|X_4|+|X_5|\\
&=|X_{24}\cap X_{25}|+\sum_{i=1}^{5}|X_i|-(|X_{11}|+|X_{12}|+|X_{13}|)\\
&=|X_{24}\cap X_{25}|+4\ell+5,
\end{align*}
so that $|X_{24}\cap X_{25}|\le 1$.
This proves the lemma.
\end{proof}

\begin{lem}
\label{not_in_45} 
Suppose that $|X_{13}|=2\ell-3$ and $v\in X_{2i}$ for some $i\in\{4,5\}$.
\begin{enumerate}
\item[\rm (a)] 
If $v\notin X_{24}\cap X_{25}$, then $|N(v)\cap X_{1i}|\ge |X_{1i}|-3$. 
\item[\rm (b)] 
If $v\notin X_{24}\cap X_{25}$ and $|X_{1i}|= 2\ell-4$,
then $G[X_{1i}\cup\{v\}]\cong K_{2\ell-3}$. 
\end{enumerate}
\end{lem}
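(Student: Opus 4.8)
The plan is to analyze the structure forced by the hypotheses $|X_{13}|=2\ell-3$ and $v\in X_{2i}$ with $v\notin X_{24}\cap X_{25}$. The key observation is that $v$ has exactly two neighbors in $U$, and since $|X_{13}|=2\ell-3$, Lemma~\ref{X_1i_clique} tells us $G[X_{13}\cup\{u_3\}]\cong K_{2\ell-2}$ is a near-maximal clique. The whole point is that a vertex $v$ lying outside this clique cannot be adjacent to too much of $X_{1i}$, because otherwise the large clique structure plus the edges from $v$ would produce $\ell K_2$ inside some neighborhood, contradicting $F_\ell\not\subset G$.

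For part (a), first I would establish that $v$ is adjacent to all but a bounded number of vertices in $X_{1i}$. The natural device is to count independent edges in $G[N(v)]$: since $G[N(v)]$ cannot contain $\ell K_2$ (Lemma~1), and $X_{1i}\cup\{u_i\}$ together with the clique $X_{13}\cup\{u_3\}$ contribute many independent edges, the number of vertices of $X_{1i}$ that $v$ misses must be small. The condition $v\notin X_{24}\cap X_{25}$ ensures that $v$ is adjacent to $u_i$ but \emph{not} to the other index in $\{4,5\}$, which is exactly what lets us use the full clique $G[X_{13}\cup\{u_3\}]$ of order $2\ell-2$ as a reservoir of independent edges disjoint from $N(v)\cap X_{1i}$. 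Concretely, I expect $N(v)$ to contain the near-perfect matching from $X_{1i}\cup\{u_i\}$ together with a matching from the $X_{13}$-clique, forcing $|N(v)\cap X_{1i}|\ge|X_{1i}|-3$; the bound $3$ should come from balancing how many edges each clique can supply against the threshold $\ell$.

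Part (b) is then a short deduction from (a). When $|X_{1i}|=2\ell-4$, we want $G[X_{1i}\cup\{v\}]$ to be a complete graph on $2\ell-3$ vertices, i.e.\ $v$ is adjacent to \emph{every} vertex of $X_{1i}$. Part (a) gives $|N(v)\cap X_{1i}|\ge 2\ell-7$, so at most three vertices of $X_{1i}$ are non-neighbors of $v$; the remaining work is to rule out even a single non-neighbor. For this I would suppose $w\in X_{1i}\setminus N(v)$ and derive a contradiction: the clique $G[X_{1i}\cup\{u_i\}]$ has order $2\ell-3$, and removing the few non-neighbors of $v$ still leaves a large clique contributing roughly $\ell-2$ independent edges inside $N(v)$; combined with an edge obtained from the $X_{13}$-clique (using again that $v$ is adjacent to $u_i$ but misses the complementary index), this should assemble $\ell K_2$ in some neighborhood and contradict $F_\ell\not\subset G$. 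Since $\omega(G)\le 2\ell-2$ by Lemma~\ref{large_clique} and $|X_{1i}\cup\{v,u_i\}|=2\ell-2$, the clique must in fact be this maximal size exactly, which pins down the structure.

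The main obstacle I anticipate is the careful bookkeeping of independent edges in the proof of part~(a): one must simultaneously track the matching coming from $X_{1i}\setminus N(v)$, the matching inside the large clique $X_{13}\cup\{u_3\}$, and the vertex $u_i$, ensuring these pieces are genuinely vertex-disjoint and that their union, intersected with $N(v)$, really does reach $\ell$ independent edges. The hypothesis $v\notin X_{24}\cap X_{25}$ is subtle but essential here — it is precisely what guarantees a clean edge from the complementary clique into $N(v)$ — so I would be careful to use it at exactly the right point and not inadvertently assume $v$ avoids both $u_4$ and $u_5$.
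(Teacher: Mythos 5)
There is a genuine gap, and it is in the core mechanism you propose for part (a). Forbidding $\ell K_2$ in $G[N(v)]$ can only yield \emph{upper} bounds on quantities like $|N(v)\cap X_{1i}|$: a vertex $v$ with \emph{few} neighbors in $X_{1i}$ contributes no matching inside $N(v)$ at all, so no contradiction can arise from counting independent edges there. Your sketch in fact presupposes the conclusion when it says you ``expect $N(v)$ to contain the near-perfect matching from $X_{1i}\cup\{u_i\}$'' --- that matching lies in $N(v)$ precisely when $v$ is adjacent to almost all of $X_{1i}$, which is what must be proved. The paper's argument works outside $N(v)$: since $v\notin X_{24}\cap X_{25}$, the second $U$-neighbor of $v$ is $u_j$ with $j\in\{1,2,3\}$, and the ordering $|X_{11}|\ge\cdots\ge|X_{15}|$ together with $|X_{13}|=2\ell-3$ forces $|X_{1j}|=2\ell-3$; then $\omega(G)\le2\ell-2$ (Lemma~\ref{large_clique}) produces a non-neighbor $w\in X_{1j}\setminus N(v)$, Lemma~\ref{bridge_lemma}~(a) (independence of the $6$-set $(U\setminus\{u_i,u_j\})\cup\{a,w,v\}$ being forbidden) forces $w$ to be adjacent to \emph{every} vertex of $X_{1i}\setminus N(v)$, and Lemma~\ref{flower_lemma} with $t=2$, applied to $w$ (not to $v$), caps $|N(w)\cap X_{1i}|\le3$. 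None of these ingredients --- the auxiliary vertex $w$, the bridge lemma, the flower bound on $w$ --- appear in your proposal, and your reading of the hypothesis is off: its role is not to supply ``a clean edge from the complementary clique into $N(v)$'' (the relevant vertices of $X_{1j}$ are \emph{non}-neighbors of $v$, so they supply nothing inside $N(v)$), but to ensure $v$'s second neighbor indexes one of the three maximal sets $X_{11},X_{12},X_{13}$.

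For part (b) your target contradiction ($\ell K_2\subset G[N(v)]$) is the right one, but the assembly falls short. Granting even the strongest version of your count, the clique $X_{1i}\cup\{u_i\}$ intersected with $N[v]$ yields at most $\ell-3$ independent edges (since up to three vertices of $X_{1i}$ may be missed), and one extra edge ``from the $X_{13}$-clique'' gives $\ell-2<\ell$; moreover adjacency of $v$ into $X_{13}$ is not justified, and flower-type constraints actually bound it from above. The missing step is the symmetric use of the assumed non-neighbor: if $w'\in X_{1i}\setminus N(v)$ exists, the same bridge-plus-flower argument run in reverse gives $|X_{1j}\setminus N(v)|\le3$ as well, whence $|N(v)\cap(X_{1i}\cup X_{1j})|\ge(2\ell-4)+(2\ell-3)-6=4\ell-13\ge2\ell-1$, and the \emph{two} cliques $N(v)\cap X_{1i}$, $N(v)\cap X_{1j}$ together with $u_i,u_j$ comfortably produce $\ell$ independent edges in $N(v)$. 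Without part (a)'s mechanism this reversal is unavailable, so the proposal as written cannot be completed for either part.
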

\begin{proof}
Suppose that $v\notin X_{24}\cap X_{25}$,
that is, $v\in X_{2j}$ for some $j\in\{1,2,3\}$. 
Since $\omega(G)\le 2\ell-2$, there is $w\in X_{1j}\backslash N(v)$. 
By Lemma~\ref{bridge_lemma}~(a),
observe that $w$ is adjacent to each vertex in $X_{1i}\backslash N(v)$. 
Thus, $|X_{1i}\backslash N(v)|\le|N(w)\cap X_{1i}|\le3$
by Lemma~\ref{flower_lemma}. 
This proves (a). 

We next prove the assertion (b).
Assume, to the contrary, that $v\notin X_{24}\cap X_{25}$, $|X_{1i}|=2\ell-4$
and $G[X_{1i}\cup\{v\}]$ is not complete.
Suppose $v\in X_{2j}$ with $j\in\{1,2,3\}$.
By the same argument above, we see that
$|X_{1i}\backslash N(v)|,|X_{1j}\backslash N(v)|\le 3$
since we can take some vertex $w'\in X_{1i}\setminus N(v)$ by the assumption.
However then, $|N(v)\cap (X_{1i}\cup X_{1j})|\ge |X_{1i}|+|X_{1j}|-6= 4\ell-13\ge 2\ell-1$, 
producing $\ell K_2$ in $G[N(v)\cap(X_{1i}\cup X_{1j}\cup \{u_i,u_j\})]$. 
Since this does not occur, (b) also holds.
\end{proof}

\begin{lem}
\label{45_estimate}
Suppose that $|X_{13}|= 2\ell-3$.
If $|X_{1i}|\ge 2\ell-5$ for some $i\in\{4,5\}$,
then $|X_{1i}|+|X_{2i}|\le 2\ell$.
\end{lem}
\begin{proof}
Note that $|X_{1i}|\in\{2\ell-5,2\ell-4\}$ by Lemma~\ref{X14_reduction}.

If $|X_{1i}|= 2\ell-4$ and $|X_{2i}|\ge 5$, then $X_{2i}$ contains four vertices $v_1,v_2,v_3,v_4$ such that $|N(v_j)\cap X_{1i}|\ge |X_{1i}|-3\ge 5$ for $1\le j\le 4$ by Lemma~\ref{not_in_45}~(a) since $|X_{24}\cap X_{25}|\le 1$. 
This however contradicts Lemma~\ref{t_matching}. 
Thus, $|X_{2i}|\le 4$, as desired. 

Similarly, if $|X_{1i}|= 2\ell-5$ and $|X_{2i}|\ge 6$, say $\{v_1,v_2,\ldots,v_6\}\subset X_{2i}$, then we may assume that $v_1v_2\in E(G)$ as $\alpha(G)= 5$ and $|N(v_j)\cap X_{1i}|\ge |X_{1i}|-3\ge 4$ for $3\le j\le 5$. 
Thus, there are three distinct vertices $w_3,w_4,w_5\in X_{1i}$ such that $v_jw_j\in E(G)$ for $3\le j\le 5$. 
These three edges with the edge $v_1v_2$ and $(\ell-4) K_2$ in $G[X_{1i}\backslash \{w_3,w_4,w_5\}]$ result in $\ell K_2$ in $G[N(u_i)]$, again a contradiction. 
It follows that $|X_{2i}|\le 5$.
\end{proof}

\section{Completion of the proof}
We are now prepared to prove Theorem~\ref{upper_bound}.
Recall that we are now considering a fixed 5-set $U$ of independent vertices in $G$
satisfying one of the conditions
\begin{center}
{\rm (A)} $|X_1|\in \{ 10\ell-18,10\ell-17 \}$\quad
or\quad 
{\rm (B)} $X_5\ne \emptyset$.
\end{center}
In the following subsections,
we consider these two cases separately.

\subsection{Case A: $|X_1|\in\{10\ell-18,10\ell-17\}$}
\mbox{}

In this subsection, we consider Case A\@.
In this case, observe that $|X_{11}|=|X_{12}|=2\ell-3$ and $|X_{14}|=2\ell-4$
since otherwise we have $|X_1|\le10\ell-19$ by Lemma~\ref{X14_reduction}.
Thus we also have $|X_{13}|+|X_{15}|\in\{4\ell-8,4\ell-7\}$.
Recalling (\ref{X2_estimate}), we have
\[
|X_2|
\ge20\ell-27-2|X_1|
=\left\{
\begin{array}{ll}
9 &(\text{if $|X_{13}|+|X_{15}|= 4\ell-8$}),\\
7 &(\text{if $|X_{13}|+|X_{15}|= 4\ell-7$}).
\end{array}
\right.
\]
We now consider two subcases, according to the value of $|X_{13}|$.

\subsubsection{}
\textbf{Subcase A1}: $|X_{13}|=2\ell-3$.\\
\indent
In this case, $|X_{15}|\in \{2\ell-5,2\ell-4\}$.
Again by (\ref{X2_estimate}) with Lemmas~\ref{X2_45} and \ref{45_estimate},
\begin{equation}
\label{X2_A1_proof}
\begin{aligned}
|X_2|
&\ge20\ell-27-2|X_1|\\
&=8\ell-9-2(|X_{14}|+|X_{15}|)\\
&\ge2(|X_{24}|+|X_{25}|)-9\\
&= 2(|X_2|+|X_{24}\cap X_{25}|)-9.
\end{aligned}
\end{equation}
Thus, 
\begin{equation}
\label{X2_A1}
9-2|X_{24}\cap X_{25}|
\ge|X_2| 
\ge\left\{
\begin{array}{ll}
9 &(\text{if $|X_{15}|= 2\ell-5$}),\\
7 &(\text{if $|X_{15}|= 2\ell-4$}).
\end{array}
\right.
\end{equation}

First, if $X_{24}\cap X_{25}\ne \emptyset$,
then it is immediate by (\ref{X2_A1_proof}) and (\ref{X2_A1})
that $|X_{1i}|= 2\ell-|X_{2i}|= 2\ell-4\ge 8$ for $i= 4,5$.
Let $X_{24}\cap X_{25}= \{v\}$.
By Lemma~\ref{not_in_45}~(b),
each of the three vertices in $X_{2i}\backslash \{v\}$
is adjacent to every vertex in $X_{1i}$ for $i= 4,5$.
Since neither $G[N(u_4)]$ nor $G[N(u_5)]$ contains $\ell K_2$,
we find that $N(v)\cap (X_{14}\cup X_{15})= \emptyset$.
However then, $G[X_{14}\cup X_{15}]\cong K_{4\ell-8}$
by Lemma~\ref{bridge_lemma}~(a),
which cannot occur since $4\ell-8>2\ell-2\ge \omega(G)$.

Thus, we may assume that $X_{24}\cap X_{25}= \emptyset$.
By (\ref{X2_A1_proof}) and (\ref{X2_A1}) again,
$|X_{1i}|= 2\ell-|X_{2i}|= 2\ell-4$ for some $i\in \{4,5\}$. 
Then Lemma~\ref{not_in_45}~(b) implies
that each of the four vertices in $X_{2i}$ is adjacent to every vertex in $X_{1i}$.
Thus, $|N(v)\cap X_{1i}|= |X_{1i}|= 2\ell-4\ge 8$ for every $v\in X_{2i}$.
However, this contradicts Lemma~\ref{t_matching}.

\subsubsection{}
\textbf{Subcase A2}: $|X_{13}|=2\ell-4$.\\
\indent
In this case, $|X_{1i}|= 2\ell-4$ for $3\le i\le 5$ and $|X_1|= 10\ell-18$. 

We first verify that,
if $v\in X_2$, then $v\in X_{2j}$ and $|N(v)\cap X_{1j}|\ge 5$
for some $j\in \{3,4,5\}$. 
To see this, suppose that $v\in X_{2i}\cap X_{2j}$ with $1\le i< j\le 5$. 
If $i\le 2$, then $3\le j\le 5$ as $X_{21}\cap X_{22}=\emptyset$
by Lemma~\ref{intersection_lemma}. 
Since $\omega(G)\le 2\ell-2$ and $|X_{1i}|= 2\ell-3$,
there is a vertex $w\in X_{1i}$ such that $vw\notin E(G)$. 
Then $w$ is adjacent to every vertex in $X_{1j}\backslash N(v)$
by Lemma~\ref{bridge_lemma}~(a),
implying that $|X_{1j}\backslash N(v)|\le 3$ by Lemma~\ref{flower_lemma}. 
If $3\le i< j\le 5$, on the other hand,
then a similar reasoning shows
that $|X_{1i}\backslash N(v)|\le 3$ or $|X_{1j}\backslash N(v)|\le 3$.
Consequently, $|X_{1j}\backslash N(v)|\le 3$ for some $j\in \{3,4,5\}$ in each case,
i.e.\ $|N(v)\cap X_{1j}|\ge |X_{1j}|-3\ge 5$. 

We next verify that $|X_2|= 9$. 
If this is not the case, then $|X_2|\ge 10$. 
By the pigeonhole principle,
there exists an integer $j\in \{3,4,5\}$ and a $4$-set $S\subset X_{2j}$
such that $|N(v)\cap X_{1j}|\ge 5$ for each $v\in S$,
contradicting Lemma~\ref{t_matching}. 

Recalling (\ref{X2_estimate}), we have 
\[
9= |X_2|\ge |X_2|-|X_4|-2|X_5|=\sum_{i=1}^{5}(3-i)|X_i|-2|X_1|\ge 9
\]
and so $|X_4|= |X_5|= 0$, which further tells us that $|X_3|= 5$. 
Also, $\sum_{i=1}i|X_{i}|= 10\ell+15$, that is, $d(u_i)= 2\ell+3$ for $1\le i\le 5$. 
By Lemma~\ref{Q_lemma},
we can find five $4$-sets $Q_1,Q_2,\ldots,Q_5$ satisfying (\ref{Q_cond}). 
By Lemma~\ref{Q_intersect}~(a)~(b) then, 
\begin{equation}
\label{Q_summation} 
20
=\sum_{i=1}^{5}|Q_i| 
=\sum_{i=1}^{5}|Q_i\cap X_2|+\sum_{i=1}^{5}|Q_i\cap X_3| 
\le|X_2|+2|X_3|= 19, 
\end{equation}
which is clearly impossible.

As a result, Subcases A1 and A2 are both impossible, i.e.\ Case A never occurs.

\subsection{Case B: $X_5\neq\emptyset$}
\mbox{}

In this subsection, we next consider Case B\@.
In this case,
we see that $|X_1|\ge 10\ell-23+3|X_5|\ge 10\ell-20$ by (\ref{X1_lower}).
Since Case A never occurs, we may assume that $|X_1|\in \{10\ell-20,10\ell-19\}$.
We again consider two subcases.

\subsubsection{}
\textbf{Subcase B1}: $|X_1|=10\ell-20$.\\
\indent
By (\ref{X1_lower}),
we find that $|X_3|=| X_4|=0$ and $|X_5|=1$, which in turn implies that $|X_2|= 15$.
Also, $\sum_{i=1}^{5}i|X_i|= 10\ell+15$
and so $d(u_i)= 2\ell+3$ for $1\le i\le 5$,
which guarantees the existence of five $4$-sets $Q_1,Q_2,\ldots,Q_5$
satisfying (\ref{Q_cond}). 
As done in (\ref{Q_summation}), we can write 
\[
20
=\sum_{i=1}^{5}|Q_i| 
=\sum_{i=1}^{5}|Q_i\cap X_2|+\sum_{i=1}^{5}|Q_i\cap X_5|
\le
|X_2|+5=20
\]
and so $\sum_{i=1}^{5}|Q_i\cap X_2|= 15$ and $\sum_{i=1}^{5}|Q_i\cap X_5|= 5$.
Hence, if we let $X_5= \{v_0\}$,
then each $Q_i$ consists of three vertices in $X_2$ and $v_0$
so that $\bigcup_{i=1}^{5}Q_i= X_2\cup X_5$.
However then, no vertex in $X_1\cup X_2$ is adjacent to $v_0$ as $v_0\in Q_i$,
implying that $N(v_0)= U$. Thus, $d(v_0)= 5< 2\ell=\delta(G)$, a contradiction. 

\subsubsection{}
\textbf{Subcase B2}: $|X_1|=10\ell-19$.\\
\indent
First, observe that
\[
3
\le
|X_3|+2|X_4|+3|X_5|
=
\sum_{i=1}^{5}(i-2)|X_i|+|X_1| \le 4
\]
by (\ref{X_total}) and (\ref{bridge_whole_X}).
Thus, we find that $|X_4|=0$, $|X_5|=1$ and either
\begin{center}
(i) $|X_2|= 13$ and $|X_3|= 1$\quad
or\quad
(ii) $|X_2|= 14$ and $|X_3|= 0$. 
\end{center}
We consider these two cases separately.
Let $X_5= \{v_0\}$. 

If (i) occurs,
then again $\sum_{i=1}^{5}i|X_i|=10\ell+15$
and so we have $4$-sets $Q_1,Q_2,\ldots,Q_5$ satisfying (\ref{Q_cond}).
Since $|X_1|=10\ell-19$,
we can check that $|X_{13}|+|X_{14}|+|X_{15}|\ge6\ell-13$.
Thus by Lemma~\ref{Q_intersect}~(a)~(b),
\begin{align*}
20
&
=\sum_{i=1}^{5}|Q_i|
=\sum_{i=1}^{5}|Q_i\cap X_2|
+\sum_{i=1}^{5}|Q_i\cap X_3|
+\sum_{i=1}^{5}|Q_i\cap X_5|\\
&\le|X_2|+2|X_3|+5=20
\end{align*} 
and we arrive at the same contradiction as in Subcase~B.1.

Finally, suppose that (ii) occurs.
Then $\sum_{i=1}^{5}i|X_i|=10\ell+14$.
Thus, four of the five vertices in $U$ have degree~$2\ell+3$
and the remaining one has degree~$2\ell+2$.
Let $i_0\in I$ for which $d(u_{i_0})=2\ell+2$. 
We know that there is a $4$-set $Q_i$
satisfying (\ref{Q_cond}) for each $i\in I\backslash \{i_0\}$.

In the set $N(u_{i_0})\backslash X_{1i_0}$,
which contains $2\ell+2-|X_{1{i_0}}|\ge 5$ vertices,
we show that there exists a $3$-set $Q\subset N(u_{i_0})\backslash X_{1i_0}$
such that $|N(v)\cap X_{1{i_0}}|\le 2$ for each $v\in Q$.
Recall Lemma~\ref{matching}.
First, if $|X_{1{i_0}}|=2t-1$ for some positive integer $t$,
then $G[N(u_{i_0})\backslash X_{1i_0}]$ contains $(\ell-t-1) K_2$
and five vertices $v_1,v_2,\ldots,v_5$.
Since $G[N(u_{i_0})]$ cannot contain $\ell K_2$,
there are at most two independent edges
between $X_{1{i_0}}$ and $\{v_1,v_2,\ldots,v_5\}$.
Thus, we may assume that $|N(v_j)\cap X_{1{i_0}}|\le 2$ for $1\le j\le 3$.
Similarly, if $|X_{1{i_0}}|= 2t$ for some positive integer $t$,
then $G[N(u_{i_0})\backslash X_{1i_0}]$ contains $(\ell-t-1) K_2$
and four vertices $v_1,v_2,v_3,v_4$. 
Then there cannot be two or more independent edges
between $X_{1{i_0}}$ and $\{v_1,v_2,v_3,v_4\}$. 
Hence, we may assume that $|N(v_j)\cap X_{1{i_0}}|\le 1$ for $1\le j\le 3$. 
As a result,
we find a $3$-set $Q= \{v_1,v_2,v_3\}\subset N(u_{i_0})\backslash X_{1i_0}$
such that $|N(v)\cap X_{1{i_0}}|\le 2$ for each $v\in Q$.

Write $Q'_{i_0}= Q$ and $Q'_i=Q_i$ for each $i\in I\backslash \{i_0\}$.
We next verify that $\sum_{i=1}^{5}|Q'_i\cap X_2|\le |X_2|$
by proving that $Q'_i\cap Q'_j\cap X_2= \emptyset$ for distinct integers $i,j\in I$.
First, since $|X_1|= 10\ell-19$, it follows that
$|X_{14}|+|X_{15}|=|X_1|-\sum_{i=1}^{3}|X_{1i}|\ge 4\ell-10\ge2\ell+2$.
Hence, by Lemma~\ref{Q_intersect}~(a),
it suffices to verify the result for $i\in I\backslash {i_0}$ and $j= i_0$.
If $v\in Q'_i\cap Q'_j\cap X_2$, then no vertex in $X_{1i}$ is adjacent to $v$.
Also, we have just shown that $v$ is adjacent to at most two vertices in $X_{1j}$.
Then by Lemma~\ref{bridge_lemma}~(a),
there are at least $|X_{1j}|-2$ vertices in $X_{1j}$
that are adjacent to every vertex in $X_{1i}$.
Hence,
$\omega(G)\ge|X_{1i}|+|X_{1j}|-2\ge|X_{14}|+|X_{15}|-2\ge 2\ell$,
which cannot occur.

Therefore,
\[
19
=\sum_{i=1}^{5}|Q'_i|
=\sum_{i=1}^{5}|Q'_i\cap X_2|+\sum_{i=1}^{5}|Q'_i\cap X_5|
\le|X_2|+5= 19,
\] 
so $v_0$ belongs to every $Q'_i$.
Thus, $N[v_0]\subset U\cup Q'_{i_0}\cup X_{1{i_0}}$.
In particular, $|N(v_0)\cap X_{1{i_0}}|\le 2$
and so $d(v_0)\le |U|+|Q'_{i_0}\backslash \{v_0\}|+2= 9< 2\ell$.
This is again impossible.

We conclude that Case B never occurs, either.

This completes
the proof of Theorem~\ref{upper_bound} and Theorem~\ref{main_thm}.

\subsection*{Acknowledgements}
The authors would like to thank Prof. Futaba Fujie
for her comments and suggestions,
which greatly improved the original manuscript.

\vspace{1mm}
\begin{flushleft}
{\footnotesize
{\sc
Graduate School of Mathematics, Nagoya University,\\
Chikusa-ku, Nagoya 464-8602, Japan.
}\\
{\it E-mail address}, S. Kadota: {\tt m13018c@math.nagoya-u.ac.jp}\\
{\it E-mail address}, Y. Suzuki: {\tt m14021y@math.nagoya-u.ac.jp}\\[2mm]
{\sc
Toyota Technological Institute,\\
Hisakata, Tenpaku-ku, Nagoya 468-8511, Japan.
}\\
{\it E-mail address}, T. Onozuka: {\tt onozuka@toyota-ti.ac.jp}\\
}
\end{flushleft}
\end{document}